\documentclass[11pt]{amsart}
\usepackage[T1]{fontenc}
\usepackage{lmodern}
\usepackage{amssymb}
\usepackage{microtype}
\usepackage{aliascnt}
\usepackage[hidelinks]{hyperref}
\usepackage[noabbrev,capitalize]{cleveref}
\usepackage[shortlabels]{enumitem}

\DeclareMathOperator{\Area}{Area}
\DeclareMathOperator{\Ric}{Ric}
\DeclareMathOperator{\tr}{tr}

\numberwithin{equation}{section}

\newtheorem{theorem}{Theorem}[section]
\newaliascnt{lemma}{theorem}
\newtheorem{lemma}[lemma]{Lemma}
\aliascntresetthe{lemma}
\newaliascnt{proposition}{theorem}
\newtheorem{proposition}[proposition]{Proposition}
\aliascntresetthe{proposition}
\newaliascnt{corollary}{theorem}
\newtheorem{corollary}[corollary]{Corollary}
\aliascntresetthe{corollary}

\theoremstyle{remark}
\newaliascnt{remark}{theorem}
\newtheorem{remark}[remark]{Remark}
\aliascntresetthe{remark}

\newcommand{\R}{\mathbb{R}}
\newcommand{\Z}{\mathbb{Z}}
\newcommand{\Sec}{\operatorname{Sec}}
\newcommand{\dd}{\,\mathrm d}

\begin{document}
	
	\title[Stable minimal hypersurfaces]
	{Stable Minimal Hypersurfaces in Positively Curved $4$-Manifolds}
	
	\author{Han Hong}
	\address{Department of Mathematics and Statistics, Beijing Jiaotong University,
		Beijing 100044, China}
	\email{hanhong@bjtu.edu.cn}
	
	\author{Gaoming Wang}
	\address{Beijing Institute of Mathematical Sciences and Applications,
		Beijing 100044, China}
	\email{gaomingwang@bimsa.cn}
	
	\begin{abstract}
		Let $M^3\to X^4$ be a complete, connected, two-sided stable minimal immersion.
		We prove that if the ambient sectional curvature is nonnegative and the
		ambient scalar curvature has a positive uniform lower bound, then $M$ is
		totally geodesic and its normal Ricci curvature vanishes.
		We also construct a complete metric of strictly positive sectional curvature
		on $\R^4$ admitting a complete, embedded, one-ended, nonparabolic, two-sided
		stable minimal hypersurface diffeomorphic to $\R^3$ which is not totally
		geodesic.
		The rigidity proof combines spectral splitting theory, a warped $\mu$-bubble
		construction, and a harmonic function level set argument.
		The example is obtained by a compactly supported deformation of an example in
		\cite{CLS}.
	\end{abstract}
	
	\maketitle
	
	\section{Introduction}\label{sec:introduction}
	
	Let $(X^{n+1},\bar g)$ be a Riemannian $(n+1)$-dimensional manifold and let
	\begin{equation*}
		F:M^n\longrightarrow X^{n+1}
	\end{equation*}
	be a complete, connected, two-sided minimal immersion.
	We denote by $\nu$ a global unit normal and by $A$ the second fundamental
	form, and we set
	\begin{equation}\label{eq:q-definition}
		q:=|A|^2+\overline{\Ric}(\nu,\nu).
	\end{equation}
	The immersion is stable when
	\begin{equation}\label{eq:stability}
		\int_M q\phi^2\leq \int_M|\nabla\phi|^2
		\qquad\text{for every }\phi\in C_c^\infty(M).
	\end{equation}
	Understanding the topology and geometry of such stable hypersurfaces is a
	fundamental problem.
	When $n=2$, the classical theory is well developed.
	In Euclidean three-space, every complete stable minimal surface is a plane by
	the work of Fischer-Colbrie--Schoen \cite{FCS}, do Carmo--Peng
	\cite{doCarmoPeng}, and Pogorelov \cite{Pogorelov}.
	More generally, Fischer-Colbrie--Schoen \cite{FCS} showed that in a
	three-manifold of nonnegative scalar curvature, the possible conformal types
	are the sphere, torus, plane, and cylinder.
	The torus and cylinder cases also yield strong rigidity conclusions: the
	surface is intrinsically flat and totally geodesic (see \cite{Schoen-Yau-PSC,Carlotto-Chodosh-Eichmair-PMT}).
	The higher-dimensional problem is much less understood.
	
	The first major progress in the next dimension was made by Chodosh--Li
	\cite{ChodoshLi} in Euclidean space (see also \cite{chodoshliR4anisotropic,catino,cabre2026gradientestimatesgreenkernel}) and by Chodosh--Li--Stryker \cite{CLS}
	in general Riemannian manifolds.
	In the latter work, authors proved a Bernstein-type theorem for complete two-sided
	stable minimal hypersurfaces $M^3\to X^4$ under the assumptions
	\begin{equation*}
		\overline\Sec\geq0,
		\qquad
		\overline R\geq R_0>0,
	\end{equation*}
	together with \emph{weakly bounded geometry} of the ambient manifold.
	The two curvature assumptions are geometrically meaningful: Examples~1.1
	and 1.2 of Chodosh--Li--Stryker \cite{CLS} show that the full rigidity
	conclusion ($q=0$) can fail once one of them is removed.
	By contrast, weakly bounded geometry plays a more technical role in their
	proof.
	We briefly recall why their approach requires this condition.
	
	Indeed, because
	$q=|A|^2+\overline\Ric(\nu,\nu)\geq0$, the stability inequality would
	immediately imply $q\equiv0$ if one could find compactly supported cutoffs
	$\phi_j\to1$ locally with
	\begin{equation*}
		\int_M|\nabla\phi_j|^2\longrightarrow0.
	\end{equation*}
	Parabolic ends already possess such cutoffs.
	Chodosh--Li--Stryker \cite{CLS} use nonnegative sectional curvature to show
	that there is at most one nonparabolic end.
	On that remaining end, the uniform positive scalar curvature allows the
	construction of a $\mu$-bubble exhaustion whose boundary components have
	uniformly controlled intrinsic diameter.
	The last step is to upgrade this diameter control to a uniform volume bound
	for fixed-width annular regions.
	Chodosh--Li--Stryker obtain the required intrinsic Ricci lower bound from the
	Gauss equation and a uniform estimate for $|A|$; weakly bounded geometry
	enters in the derivation of this curvature estimate.
	Bishop--Gromov volume comparison then gives an almost-linear volume-growth
	estimate, from which one constructs the desired cutoffs and completes the
	stability argument.
	Under nonnegative ambient Ricci curvature, Chodosh--Li--Stryker \cite{CLS}
	also note that weakly bounded geometry may be replaced by a uniform upper
	bound for sectional curvature.
	This approach has been further developed in related problems
	\cite{hong24,hongyan,wuyujie,wuyujie2}, where weakly bounded geometry is also
	assumed.
	
	We remove the weakly bounded geometry assumption.
	
	\begin{theorem}\label{thm:main}
		Let $(X^4,\bar g)$ be a complete Riemannian manifold satisfying
		\begin{equation}
			\overline{\Sec}\geq0,
			\qquad
			\overline R\geq\kappa>0.
		\end{equation}
		Every complete, connected, two-sided stable minimal immersion
		$M^3\to X^4$ satisfies
		\begin{equation}
			A\equiv0,
			\qquad
			\overline{\Ric}(\nu,\nu)\equiv0.
		\end{equation}
		
	\end{theorem}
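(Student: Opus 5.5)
We argue by contradiction: assume $q\not\equiv0$ somewhere on $M$. Stability then forces $M$ to be nonparabolic, since on a parabolic manifold there are cutoffs $\phi_j\to1$ locally with $\int_M|\nabla\phi_j|^2\to0$, and \eqref{eq:stability} with $q\ge0$ would give $q\equiv0$. The same reasoning, carried out end by end, shows that every parabolic end carries no mass of $q$ beyond what the cutoffs see.

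The first step is to reduce to a single end. As in \cite{CLS}, $\overline{\Sec}\ge0$ together with stability implies that $M$ has at most one nonparabolic end. I would derive this from the spectral splitting theory: stability gives a positive solution of $\Delta u+qu\le0$. If there were two nonparabolic ends, there would be a nonconstant bounded harmonic function $h$ with finite Dirichlet energy, obtained as a Li--Tam barrier. Apply a Bochner formula to $h$ with the weighted (spectral) Ricci term $\Ric_M-\gamma q$. The Gauss equation together with $\overline{\Sec}\ge0$ bounds $\Ric_M\ge -|A|^2$-type terms, and these are absorbed by $q$ in the $\mu$-bubble-free spectral splitting of Li--Wang/Antonelli--Xu type. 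This forces $\nabla^2 h\equiv0$ and a splitting, which in turn forces $q\equiv0$.

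On the remaining nonparabolic end $E$, I will build a warped $\mu$-bubble exhaustion using $\overline R\ge\kappa$, replacing the Bishop--Gromov step that needed the curvature estimate for $|A|$. Using the stability weight $u$ (the first eigenfunction of $-\Delta-q$ on large domains), consider warped $\mu$-bubbles $\Sigma_k$ minimizing $\int_{\partial\Omega}u^\gamma-\int_\Omega h\,u^\gamma$ in the annuli $\{k L\le \rho\le (k+1)L\}$. The second variation, combined with the traced Gauss equation $2\,\Ric_M$-scalar $\ge \overline R-2\overline{\Ric}(\nu,\nu)-|A|^2$, makes the $\kappa$ term dominate. Each component of $\Sigma_k$ is then a $2$-sphere with diameter $\le C(\kappa)$ and area $\le C(\kappa)$, via Gauss--Bonnet in the warped form.

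Instead of volume growth, I will use a harmonic level-set argument. Let $h$ be the Green-type harmonic function on $E$, with $h\to0$ at infinity, $0<h<1$, and finite energy. Stern/Munteanu--Wang style level-set identities, $\int_{\{h=t\}}|\nabla h|=$ const and Kato improvements, combined with the bounded area of the spheres $\Sigma_k$ separating the end, give that the energy flux through each annulus decays. Concretely, I would test stability with $\phi=\eta(h)$. Then
\[
\int_M|\nabla\eta(h)|^2=\int \eta'(t)^2\Big(\int_{\{h=t\}}|\nabla h|\Big)dt=c\int\eta'(t)^2\,dt .
\]
Choosing logarithmic cutoffs in $h$ near $0$ makes this arbitrarily small, provided $\{h\ge t\}$ is compact, i.e. $h\to0$ uniformly at infinity on $E$. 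That uniform decay is what the $\mu$-bubble spheres provide, via the Harnack inequality on the bounded-diameter slabs. This yields $\int q\,\eta^2\to0$, so $q\equiv0$, giving $A\equiv0$ and $\overline{\Ric}(\nu,\nu)\equiv0$.

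The main obstacle is showing that $h\to0$ uniformly along $E$ without a lower Ricci bound on $M$. Harnack on a slab needs geometric control that we lack. I would first try to prove a Harnack or gradient estimate for $h$ on the region between $\Sigma_k$ and $\Sigma_{k+1}$ using only the spectral Ricci bound $\Ric_M\ge-\gamma q$, integrated against $u$. Alternatively, I would avoid pointwise decay by showing directly that $\int_{\{h=t\}}|\nabla h|\,q$-weighted quantities vanish, using that $\Sigma_k$ has bounded area.
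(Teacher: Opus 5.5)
Your proposal has a genuine gap at the decisive step: on a nonparabolic end, testing stability with $\phi=\eta(h)$ can never produce small energy.

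By coarea, $\int_M|\nabla\eta(h)|^2=c\int\eta'(t)^2\,dt$ with $c=\int_{\{h=t\}}|\nabla h|>0$. Since $\phi$ must have compact support and tend to $1$ locally, $\eta$ has to rise from $0$ to $1$ inside $(0,1)$. Cauchy--Schwarz then gives $\int\eta'(t)^2\,dt\geq1$, so $\int_M|\nabla\phi|^2\geq c$ for every such cutoff. Logarithmic cutoffs $\eta(t)=\log(t/t_0)/\log(t_1/t_0)$ are worse: their energy is about $c/(t_0\log^2(t_1/t_0))\to\infty$. This is just positivity of capacity. Cutoffs with $\int_M|\nabla\phi_j|^2\to0$ exist exactly when the end is parabolic, which is what you need to prove, so the step is circular. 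For the same reason, the flux of a harmonic function through successive separating surfaces is conserved and cannot ``decay through annuli''. The uniform decay $h\to0$ that you single out as the main obstacle is therefore beside the point: even if granted, the argument does not conclude. Area bounds on the separating spheres also do not by themselves rule out nonparabolicity.

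What your plan lacks is a quantity that the $\mu$-bubbles bound uniformly but that must blow up on a nonparabolic manifold. The paper uses the $L^1$ norm of the gradient of the capacitor $v_R$ between a fixed inner boundary $\Gamma$ and the warped $\mu$-bubble $\Sigma_R$. It shows $\int_{D_R}|Dv_R|\le C$ independently of $R$. A nonconstant harmonic limit $v$, however, has flux $c>0$ through every $\partial B_r$, which forces $\int_{B_T\setminus B_{T_0}}|Dv|\ge c(T-T_0)$.

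The uniform bound comes from integrating the Munteanu--Wang inequality $\Delta|Dv|\ge\frac12(R_M-R_t)|Dv|+\frac34|D|Dv||^2/|Dv|$ against $\theta(v)=(2-v)^{8/11}$, in four moves:
\begin{enumerate}
\item use $R_M\ge\kappa-2q$;
\item absorb $-\int q\theta|Dv|$ by writing $q=-\Delta\log u-|D\log u|^2$ and integrating by parts, so the interior integrand becomes nonnegative;
\item bound $\int_{\{v=t\}}R_t\le 8\pi$ by Gauss--Bonnet, which requires every level set to be connected and hence the reduction to one end with $H_2(M;\Z)=0$, a step your sketch never establishes;
\item on $\Sigma_R$, observe that $\theta D_\nu|Dv|=-\theta H|Dv|$ and $-\theta|Dv|D_\nu\log u$ combine, via the $\mu$-bubble equation $H+\partial_\nu\log u=h$, into $-\theta h|Dv|$, which the uniform bound on $\int_{\Sigma_R}h^2$ controls.
\end{enumerate}
None of these ingredients appears in your proposal: not the $L^1$ quantity, not the absorption of $q$ through $\log u$, not the exact boundary cancellation, and not the connectedness of level sets. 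As it stands, the nonparabolic end is not ruled out.
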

	
	The proof is intrinsic based on nonnegative curvatures in the spectral sense. First we establish the following two identities
	\begin{equation}
		\Ric_M\geq-\frac23q,
		\qquad
		R_M+2q=\overline R+|A|^2\geq\kappa,
	\end{equation}
	Here $\Ric_M$ denotes the smallest eigenvalue of the Ricci tensor.
	Stability provides a positive Jacobi function $u$ with
	\begin{equation}\label{eq:jacobi-u-intro}
		\Delta u+q u=0.
	\end{equation}
	Consequently the same function satisfies the positive spectral scalar
	inequality
	\begin{equation}
		-\Delta u+\frac12R_Mu\geq\frac\kappa2u.
	\end{equation}
	
	The first ingredient is the sharp spectral splitting theorem of
	Antonelli--Pozzetta--Xu \cite{APX}, combined with the topology theorem of
	Catino--Mari--Mastrolia--Roncoroni \cite{CMMR}.
	In the only nontrivial case, $M$ has one end and $H_2(M;\Z)=0$; in fact,
	$M$ is diffeomorphic to $\R^3$ in this case.
	The second ingredient is a family of separating warped $\mu$-bubbles
	$\Sigma_R$ escaping to infinity.
	Keeping the normally discarded square in the second variation yields
	\begin{equation}
		\int_{\Sigma_R}
		\left(H^2+h^2+|\partial_\nu\log u|^2\right)\leq C
	\end{equation}
	for a constant $C$ independent of $R$.
	The third ingredient is a harmonic function between a fixed inner boundary
	and $\Sigma_R$.
	Integration by parts converts all interior terms into nonnegative squares.
	On $\Sigma_R$, the normal derivative of the weight $u$ cancels exactly with
	the corresponding term in the weighted mean-curvature equation.
	This gives an $R$-independent $L^1$ bound for the gradient of the harmonic
	function.
	Nonparabolicity would then produce a nonconstant function with finite $L^1$
	energy, contradicting conservation of flux.
	This last step is inspired by recent work of Yan--Zhu \cite{YanZhu}.
	Hence $M$ is parabolic, and \eqref{eq:stability} forces $q\equiv0$.
	
	It is then natural to ask whether the uniform scalar curvature gap in
	\Cref{thm:main} can be removed.
	Equivalently, does nonnegative ambient sectional curvature alone force a
	complete two-sided stable minimal $3$-dimensional hypersurface to be totally
	geodesic?
	Cabr\'e--Catino--Mari--Mastrolia--Roncoroni
	\cite[p.~9]{CCMMR26} explicitly raised this question.
	Our second result gives a negative answer; in fact, the counterexample has
	strictly positive sectional curvature.
	
	\begin{theorem}\label{thm:counterexample}
		There exist a complete smooth metric $G$ on $\R^4$ satisfying
		$\Sec_G>0$ and a complete embedded hypersurface
		\begin{equation*}
			\Sigma^3\subset(\R^4,G),
			\qquad
			\Sigma\cong\R^3,
		\end{equation*}
		which is one-ended, nonparabolic, two-sided, stable, and minimal, but is
		not totally geodesic.
	\end{theorem}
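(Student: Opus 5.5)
The plan is to start from the example of Chodosh--Li--Stryker \cite{CLS} (their Example~1.1). There the ambient scalar curvature is allowed to decay, and nonnegative sectional curvature carries a non-totally-geodesic stable minimal hypersurface. We then modify that metric by a compactly supported deformation so that the sectional curvature becomes strictly positive everywhere, keeping the hypersurface minimal, stable and non-totally-geodesic. Concretely, we take the CLS ambient metric on $\R^4$ as a warped or rotationally symmetric structure over a base containing $\Sigma\cong\R^3$. Its symmetry (a reflection across $\Sigma$, or a $\Z_2$-action fixing $\Sigma$) keeps $\Sigma$ minimal under any symmetric deformation. We first record the curvature data of the CLS example: where $\Sec\geq0$ is strict and where it degenerates. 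We also record $q=|A|^2+\overline{\Ric}(\nu,\nu)$ on $\Sigma$ and the fact that $\Sigma$ is nonparabolic and one-ended, which is an intrinsic property of $\Sigma$. We expect $\Sec>0$ to already hold outside a compact set, or to be repairable there by an explicit warping change.

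The key steps, in order, are as follows.
\begin{enumerate}[(i)]
\item Choose the deformation $G=\bar g+\varepsilon h$, with $h$ compactly supported, symmetric with respect to the reflection fixing $\Sigma$, and with $\Sec_h$-contribution positive on the compact set $K$ where $\Sec_{\bar g}$ vanishes on some planes. A natural choice is a conformal or warping-function perturbation $f\mapsto f+\varepsilon\eta$ with $\eta$ strictly concave in the relevant directions on $K$. We verify that $\Sec_G>0$ on $K$ for small $\varepsilon$ and that $\Sec_G=\Sec_{\bar g}>0$ elsewhere.
\item Minimality of $\Sigma$ follows from the reflection symmetry, since $\Sigma$ is the fixed-point set of an isometry. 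Two-sidedness and embeddedness are inherited. Non-totally-geodesic is an open condition, so it persists provided the support of $h$ avoids a point where $A\neq0$, or $\varepsilon$ is small.
\item Nonparabolicity and one-endedness of $(\Sigma,G|_\Sigma)$ are unaffected, since the induced metric changes only on a compact set and parabolicity is a quasi-isometry invariant.
\end{enumerate}

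The main obstacle is stability. Strict positivity of sectional curvature raises $\overline{\Ric}_G(\nu,\nu)$, hence $q$, and could destroy stability. We would handle this by showing that the CLS hypersurface is strictly stable in a quantitative sense. Since $\Sigma$ is nonparabolic, it carries a positive Green's function and a Hardy-type inequality $\int w\phi^2\leq\int|\nabla\phi|^2$ with a positive weight $w$. If the CLS example satisfies $q_{\bar g}\leq(1-\delta)w$, or more generally has a positive Jacobi-type supersolution with slack on the compact set $K\cap\Sigma$, then a compactly supported perturbation of $q$ of size $O(\varepsilon)$ preserves $\int q_G\phi^2\leq\int|\nabla\phi|^2$. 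To obtain this, we would first build a positive function $u$ on $\Sigma$ with $\Delta u+q_{\bar g}u\leq-c\,\chi_K u$, using the explicit symmetric structure of the CLS example, for instance a radial ODE. Such a $u$ exists if the bottom of the spectrum of $-\Delta-q_{\bar g}$ with the compact potential $\chi_K$ added has slack, and that slack follows from nonparabolicity. By Fischer-Colbrie--Schoen, this $u$ then gives stability for $q_G\leq q_{\bar g}+c\,\chi_K$ when $\varepsilon$ is small. If the CLS example is only marginally stable, we would instead first rescale its construction, shrinking the part of $q$ near $K$, to create this slack before deforming.
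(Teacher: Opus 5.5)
There is a genuine gap: your argument never produces a hypersurface with $A\not\equiv0$, and that is the entire content of the theorem.

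First, the starting example you describe is not available. The positively curved CLS model with decaying scalar curvature is Example~1.2 of \cite{CLS}, $G_0=dr^2+\rho^2(r)g_{S^3}$. It already has $\Sec>0$ everywhere, and its equatorial $\Sigma\cong\R^3$ is \emph{totally geodesic}. The other CLS example (Example~1.1) is the one illustrating the necessity of $\overline{\Sec}\ge0$, so it does not have nonnegative sectional curvature. Indeed, whether $\overline{\Sec}\ge0$ alone forces total geodesicity is exactly the question raised in \cite{CCMMR26} that this theorem answers, so there is no non-totally-geodesic example to repair. The real problem is therefore not upgrading $\Sec\ge0$ to $\Sec>0$. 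It is creating second fundamental form while keeping $H=0$ and stability.

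Second, your step (ii) is self-defeating. If an isometry $\sigma$ fixes $\Sigma$ pointwise with $d\sigma(\nu)=-\nu$, then $A(X,Y)=G(D_X\nu,Y)$ satisfies $A=-A$. So every reflection-symmetric deformation keeps $\Sigma$ totally geodesic, including any conformal or warping perturbation of the rotationally symmetric model. Likewise, the remark that being non-totally-geodesic is an open condition presupposes an initial $A\neq0$ that you do not have.

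The missing idea is to break the symmetry by prescribing the normal derivatives of the metric in Fermi coordinates $G_0=dt^2+g_t^0$ around a compact piece of $\Sigma$. Set $g_t^\varepsilon=g_t^0+2\varepsilon t\chi(t)T+\frac23\varepsilon^2t^2\chi(t)|T|_g^2g$, with $T$ compactly supported and trace-free and $\chi$ an even cutoff equal to $1$ near $t=0$. Then:
\begin{itemize}
\item the induced metric is unchanged;
\item $A_\varepsilon=\frac12\dot g_0^\varepsilon=\varepsilon T$, and $H_\varepsilon=\varepsilon\tr_gT=0$ gives minimality with no symmetry;
\item by $\Ric(\nu,\nu)=-\frac12\tr_g\ddot g_0+|A|^2$, the quadratic term lowers $\Ric(\nu,\nu)$ by exactly $\varepsilon^2|T|^2$, so $q$ is unchanged pointwise and the stability form is literally the same;
\item $\Sec>0$ persists on the support because $G_\varepsilon-G_0=O(\varepsilon)$ in $C^2$.
\end{itemize}

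Your slack strategy could replace this exact cancellation. Keeping only the linear term raises $q$ by the compactly supported amount $2\varepsilon^2|T|^2$. In the CLS model, $\Ric_{G_0}(\nu,\nu)\le C(1-\alpha)r^{-2}$, and the Hardy inequality on $\Sigma$ gives $\int q\phi^2\le C'(1-\alpha)\int|\nabla\phi|^2$. This slack absorbs the increase for $\alpha$ near $1$ and $\varepsilon$ small. However, such slack does not follow from nonparabolicity alone, as you assert. The operator $-\Delta-q$ can be critical on a nonparabolic manifold, for example $q=3(1+|x|^2)^{-2}$ on $\R^3$ with ground state $(1+|x|^2)^{-1/2}$. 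In that case no nonnegative, nonzero, compactly supported increase of $q$ preserves stability.
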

	
	We briefly explain the construction.
	Example~1.2 of Chodosh--Li--Stryker \cite{CLS} provides a complete positively
	curved metric $G_0$ on $\R^4$ containing a complete, stable, totally geodesic
	hypersurface $\Sigma\cong\R^3$.
	In Fermi coordinates around a compact subset of $\Sigma$, write
	\begin{equation*}
		G_0=dt^2+g_t^0,
		\qquad
		\Sigma=\{t=0\}.
	\end{equation*}
	Choose a nonzero compactly supported trace-free symmetric tensor $T$ on
	$\Sigma$.
	With a cutoff in the $t$-variable, define
	\begin{equation*}
		g_t^\varepsilon
		=g_t^0+2\varepsilon tT
		+\frac23\varepsilon^2t^2|T|^2g_0.
	\end{equation*}
	The derivatives at $t=0$ give
	\begin{equation*}
		g_\varepsilon|_{T\Sigma}=g_0|_{T\Sigma},
		\qquad
		A_\varepsilon=\varepsilon T,
		\qquad
		H_\varepsilon=0,
	\end{equation*}
	and
	\begin{equation*}
		\Ric_{G_\varepsilon}(\nu,\nu)
		=\Ric_{G_0}(\nu,\nu)-\varepsilon^2|T|^2.
	\end{equation*}
	Consequently the Jacobi potential is preserved pointwise:
	\begin{equation*}
		|A_\varepsilon|^2+\Ric_{G_\varepsilon}(\nu,\nu)
		=\Ric_{G_0}(\nu,\nu).
	\end{equation*}
	Thus the induced metric, completeness, and stability are unchanged, whereas
	$A_\varepsilon\not\equiv0$.
	Since the deformation is compactly supported and $C^2$-small, strict
	positivity of sectional curvature on its support is preserved.
	The scalar curvature of the starting model tends to zero at infinity, so
	\Cref{thm:counterexample} does not contradict \Cref{thm:main}.
	Moreover, the example shows that the uniform scalar gap is a necessary
	rigidity hypothesis and that intrinsic spectral information alone cannot
	detect total geodesicity.
	
	Let us remark that this example has positive Ricci curvature decaying to zero. It is interesting to mention that there exists closed $4$-manifold with positive Ricci curvature which contains a complete stable two-sided minimal hypersurface that is totally geodesic (see \cite{ChodoshLi,maximo-example}).
	
	\vskip.2cm
	The paper is organized as follows.
	In \Cref{sec:hypersurface-identities} we derive the intrinsic Ricci and scalar
	identities associated with the Jacobi function.
	\Cref{sec:topological-reduction} performs the topological reduction.
	\Cref{sec:bubble-separators} constructs separating warped $\mu$-bubbles and
	proves the uniform boundary estimate.
	\Cref{sec:harmonic-identity} establishes the harmonic function identity, and
	\Cref{sec:main-proof} combines these ingredients to prove \Cref{thm:main}.
	Finally, \Cref{sec:counterexample-proof} proves \Cref{thm:counterexample} by
	compactly deforming the first and second normal derivatives of the ambient
	metric in the Chodosh--Li--Stryker example.
	
	\subsection*{Acknowledgments}
	
	The first author is supported by the Fundamental Research Funds for the
	Central Universities, grant no.~YA26JBMC00040, and by the National Natural
	Science Foundation of China, grant no.~12401058.
	
	\subsection*{AI disclosure} The construction of perturbed metric example in \Cref{sec:counterexample-proof} was initially
	provided through interactions with ChatGPT 5.6. The
	authors verified and completed all mathematical arguments and
	take full responsibility for the content. ChatGPT 5.6 is also used for the presentation of this paper.
	
	\section{Hypersurface identities and the spectral reduction}
	\label{sec:hypersurface-identities}
	
	Throughout this section, $M^3\to X^4$ satisfies the assumptions of
	\Cref{thm:main}.
	The condition $\overline\Sec\geq0$ gives $q\geq0$.
	
	\begin{lemma}\label{lem:intrinsic-ricci}
		At every point of $M$,
		\begin{equation}\label{eq:rho-control}
			\Ric_M\geq-\frac23|A|^2\geq-\frac23q.
		\end{equation}
		\begin{equation}\label{eq:scalar-gauss}
			R_M+2q=\overline R+|A|^2.
		\end{equation}
	\end{lemma}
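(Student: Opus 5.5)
Both claims come from the Gauss equation at a point $p\in M$, worked in an orthonormal frame $e_1,e_2,e_3$ of $T_pM$ that diagonalizes $A$ with principal curvatures $\lambda_1,\lambda_2,\lambda_3$, where $\lambda_1+\lambda_2+\lambda_3=0$ by minimality. The Gauss equation gives
\begin{equation*}
\Ric_M(e_i,e_i)=\sum_{j\neq i}\overline{\Sec}(e_i,e_j)+\lambda_i(H-\lambda_i)
=\sum_{j\neq i}\overline{\Sec}(e_i,e_j)-\lambda_i^2 .
\end{equation*}
Since $\overline{\Sec}\geq0$, this yields $\Ric_M\geq -A^2$ as quadratic forms, i.e.\ $\Ric_M(v,v)\geq -|A(v)|^2$ for unit $v$. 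The sectional-curvature terms are nonnegative for any frame, so the bound holds for arbitrary unit vectors, not only eigenvectors.

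For the first inequality in \eqref{eq:rho-control}, bound the largest eigenvalue of $A^2$, namely $\max_i\lambda_i^2$, in terms of $|A|^2$ using only the trace-free condition. Say $\lambda_1^2$ is the largest. Then $\lambda_1=-(\lambda_2+\lambda_3)$, so $\lambda_1^2\leq 2(\lambda_2^2+\lambda_3^2)$ by Cauchy--Schwarz. Hence
\begin{equation*}
|A|^2=\lambda_1^2+\lambda_2^2+\lambda_3^2\geq \tfrac32\lambda_1^2,
\end{equation*}
which gives $\lambda_i^2\leq\frac23|A|^2$ for every $i$. Combining with $\Ric_M\geq -A^2$ gives $\Ric_M\geq-\frac23|A|^2$. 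The second inequality, $-\frac23|A|^2\geq -\frac23 q$, holds because $q-|A|^2=\overline{\Ric}(\nu,\nu)$ is a sum of three sectional curvatures and is therefore nonnegative.

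For \eqref{eq:scalar-gauss}, trace the Gauss equation. This gives
\begin{equation*}
R_M=\overline R-2\overline{\Ric}(\nu,\nu)+H^2-|A|^2=\overline R-2\overline{\Ric}(\nu,\nu)-|A|^2 .
\end{equation*}
Substituting $\overline{\Ric}(\nu,\nu)=q-|A|^2$ gives $R_M=\overline R-2q+|A|^2$, which is the claimed identity.

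There is no real obstacle in this proof. The only step requiring care is the sharp constant $\frac23$: it comes from the trace-free eigenvalue bound, and the extremal case is $(2,-1,-1)$ up to scaling, where $\lambda_1^2=\frac23|A|^2$ exactly.
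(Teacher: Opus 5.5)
Your proposal is correct and follows the paper's proof: the Ricci inequality comes from the Gauss equation with $H=0$ together with the trace-free bound $|A(v)|^2\leq\frac23|A|^2$, and the scalar identity comes from the traced Gauss equation combined with $\overline{\Ric}(\nu,\nu)=q-|A|^2$. Your eigenvalue argument just makes explicit the algebraic bound $\sum_j h_{1j}^2\leq\frac23|A|^2$, which the paper states without proof.
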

	
	\begin{proof}
		Choose a local frame $e_1,e_2,e_3$ on $M$.
		The Gauss equation gives
		\begin{equation*}
			\Ric_M(e_1,e_1)
			=\sum_{j\geq  1}\overline K(e_1,e_j)-h_{1j}^2.
		\end{equation*}
		Since $M$ is minimal, we have
		\begin{equation*}
			\sum h_{1j}^2\leq\frac23|A|^2.
		\end{equation*}
		This proves \eqref{eq:rho-control}.
		Tracing the Gauss equation and using
		$H=0$ gives
		\begin{equation*}
			R_M=\overline R-2\overline\Ric(\nu,\nu)-|A|^2.
		\end{equation*}
		Combining this with \eqref{eq:q-definition} proves
		\eqref{eq:scalar-gauss}.
	\end{proof}
	
	Fischer-Colbrie--Schoen \cite{FCS} showed that stability yields a function
	$u\in C^\infty(M)$ with $u>0$ satisfying \eqref{eq:jacobi-u-intro}.
	
	\begin{corollary}\label{cor:two-spectral}
		The function $u$ satisfies
		\begin{align}
			-\frac23\Delta u+\Ric_Mu&\geq0,
			\label{eq:spectral-ricci-nonnegative}\\
			-\Delta u+\frac12R_Mu&\geq\frac\kappa2u.
			\label{eq:spectral-biric-positive}
		\end{align}
	\end{corollary}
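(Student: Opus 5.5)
Both inequalities follow by substituting the Jacobi equation $\Delta u=-qu$ into the left-hand sides and then applying the pointwise bounds of \Cref{lem:intrinsic-ricci}; no integration or further analysis is needed, since $u>0$ lets us multiply pointwise inequalities by $u$.

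For \eqref{eq:spectral-ricci-nonnegative}, we compute
\begin{equation*}
	-\frac23\Delta u+\Ric_M u=\frac23 q u+\Ric_M u=\Bigl(\Ric_M+\frac23 q\Bigr)u .
\end{equation*}
By \eqref{eq:rho-control}, $\Ric_M+\frac23q\geq0$ pointwise, where $\Ric_M$ is the smallest Ricci eigenvalue. Since $u>0$, the right-hand side is nonnegative.

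For \eqref{eq:spectral-biric-positive}, we similarly write
\begin{equation*}
	-\Delta u+\frac12R_Mu=\Bigl(q+\frac12R_M\Bigr)u=\frac12\bigl(R_M+2q\bigr)u .
\end{equation*}
By \eqref{eq:scalar-gauss}, $R_M+2q=\overline R+|A|^2\geq\overline R\geq\kappa$, using the hypothesis $\overline R\geq\kappa$ of \Cref{thm:main}. Multiplying by $u/2>0$ gives the claim.

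There is no real obstacle. The only points to check are the sign conventions: $\Delta$ is the analyst's Laplacian, so that $\Delta u+qu=0$ gives $-\Delta u=qu$. Also, in \eqref{eq:rho-control} the quantity $\Ric_M$ denotes the lowest eigenvalue, so the first inequality is a scalar inequality between functions.
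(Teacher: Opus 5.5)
Your proposal is correct and follows the paper's proof: you substitute $-\Delta u=qu$ into both left-hand sides, then apply the pointwise bounds $\Ric_M\geq-\frac23q$ and $R_M+2q=\overline R+|A|^2\geq\kappa$ from \Cref{lem:intrinsic-ricci}, using $u>0$. Your remarks on the sign of the Laplacian and on $\Ric_M$ denoting the smallest Ricci eigenvalue match the paper's conventions.
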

	
	\begin{proof}
		Since $-\Delta u=q u$, \Cref{lem:intrinsic-ricci} gives
		\begin{equation*}
			-\frac23\Delta u+\Ric_Mu
			=\left(\frac23q+\Ric_M\right)u\geq0.
		\end{equation*}
		The scalar identity gives
		\begin{equation*}
			-\Delta u+\frac12R_Mu
			=\left(q+\frac12R_M\right)u
			=\frac12(\overline R+|A|^2)u
			\geq\frac\kappa2u.
		\end{equation*}
	\end{proof}
	
	In dimension three, $R_M/2$ is the bi-Ricci curvature.
	Thus
	\eqref{eq:spectral-biric-positive} is precisely a positive spectral
	bi-Ricci bound with parameter $\gamma=1$.
	
	\section{Topological reduction}\label{sec:topological-reduction}
	
	In this section, we study the topology of $M$.
	
	\begin{proposition}\label{prop:topological-reduction}
		Either $q\equiv0$, or, after passing to the orientable double cover if
		necessary, $M$ has one end and
		\begin{equation*}
			H_2(M;\Z)=0.
		\end{equation*}
	\end{proposition}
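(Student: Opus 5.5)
We first pass to the orientable double cover $\tilde M\to M$ if $M$ is non-orientable. The pullbacks of $u$ and $q$ still satisfy $\Delta u+qu=0$ with $u>0$. By Fischer-Colbrie--Schoen, the existence of such a $u$ makes $\tilde M$ stable for the operator $-\Delta-q$. The identities of \Cref{lem:intrinsic-ricci} and \Cref{cor:two-spectral} are local, so they hold on $\tilde M$. If we prove $q\equiv0$ on $\tilde M$, then $q\equiv0$ on $M$. So from now on we assume $M$ is orientable.

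\emph{Step 1: the number of ends.} Suppose $M$ has at least two ends. Inequality \eqref{eq:spectral-ricci-nonnegative} says $-\frac23\Delta u+\Ric_M u\ge0$, which is a nonnegative spectral Ricci bound with constant $\gamma=\frac23$. In dimension three, $\gamma=\frac23$ is below the sharp threshold $\frac{4}{n-1}=2$ in the spectral splitting theorem of Antonelli--Pozzetta--Xu \cite{APX}. That theorem says: if $M$ has two ends (or a line), then $M$ splits isometrically as $\R\times N$ and $u$ is constant.

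With $u$ constant, the Jacobi equation gives $q=-\Delta u/u\equiv0$. So in this case $q\equiv0$.

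Step 1 does not need the spectral splitting to handle parabolic ends. If instead some ends are parabolic, one could also argue directly with cutoffs. The splitting statement covers all cases at once, so this is the route I would take.

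\emph{Step 2: second homology.} Now assume $M$ has exactly one end and $H_2(M;\Z)\ne0$. Inequality \eqref{eq:spectral-biric-positive} is a uniformly positive spectral bi-Ricci (scalar) bound: $-\Delta u+\frac12R_Mu\ge\frac\kappa2 u$. I would invoke the topology theorem of Catino--Mari--Mastrolia--Roncoroni \cite{CMMR}. Under a spectral nonnegative Ricci condition with $\gamma<$ the sharp threshold, combined with uniformly positive spectral scalar curvature, it classifies complete orientable $3$-manifolds. Either $M$ is diffeomorphic to $\R^3$ (so $H_2=0$), or $M$ has a quotient-of-$\mathbb S^2\times\R$ type structure that splits.

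More concretely, a nontrivial class in $H_2$ can be represented by an area-minimizing embedded surface, which exists by the one-ended geometry together with the $\mu$-bubble confinement coming from positive spectral scalar curvature. Stability of this surface, weighted by $u$, together with $\Ric$ nonnegativity in the spectral sense, forces it to be totally geodesic with $\Ric_M(\nu,\nu)u$-terms vanishing. The resulting rigidity again gives a local splitting and $u$ constant, hence $q\equiv0$.

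\emph{Main obstacle.} The delicate step is checking that the hypotheses of \cite{APX} and \cite{CMMR} are met with the exact constants $\frac23$ and $\gamma=1$, and that their rigid cases force $u$ to be constant rather than merely splitting the metric. For the second point I would use the equality case: in the splitting, the weighted Bochner argument forces $\nabla\log u$ to vanish along the splitting. Because our $u$ is the same function in both spectral inequalities, this gives $\Delta u=0$, so $qu=0$ and $q\equiv0$. The one remaining case is then exactly the one in the statement: one end and $H_2(M;\Z)=0$.
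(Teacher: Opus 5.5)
Your orientation-cover reduction and Step~1 are fine and agree with the paper's Case~1. The constancy of $u$ you assert does follow: the split product $\R\times N$ with $N$ compact and $\Ric_N\ge0$ has $\Ric_M=0$, so $u$ is a positive superharmonic function on a parabolic manifold. The paper instead inserts parabolic cutoffs into stability.

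The gap is in Step~2. The classification you attribute to \cite{CMMR} (``$\R^3$ or a split quotient'') is not what that paper provides. The closest statement of that shape is the Chai--Sun theorem, which the paper uses later in \Cref{prop:R3-refinement}. Its split alternative includes $\widetilde M=P\times\R$ with $P$ noncompact, which need not be parabolic, so your Bochner remark does not make $u$ constant there. That case has to be excluded by the separate argument $\lambda_1(-\Delta+K_P)=0<\kappa/2$.

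Your ``more concrete'' route does not close the gap either, for three reasons:
\begin{enumerate}[label=\textup{(\alph*)}]
\item You give no reason why a nonzero class in $H_2$ of a noncompact manifold with no geometry bounds has a (weighted) area-minimizing representative. Mass can escape to infinity, and $\mu$-bubble separators do not confine a homological minimizer.
\item A \emph{spectral} Ricci bound does not yield pointwise total geodesy and $\Ric_M(\nu,\nu)=0$ from weighted stability.
\item Rigidity near one surface does not propagate to a global splitting or force $u$ to be constant.
\end{enumerate}

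The idea you are missing is that Step~1, applied to a suitable double cover, already handles $H_2\neq0$. This needs no minimal surfaces and no use of $\kappa$.
\begin{enumerate}[label=\textup{(\roman*)}]
\item Since $M$ is oriented and one-ended, Poincar\'e duality gives $H_2(M;\Z)\cong H^1_c(M;\Z)$. Moreover $H^1_c(M;\Z)\to H^1(M;\Z)$ is injective, because $H^0(M)\to\varinjlim_K H^0(M\setminus K)$ is onto when there is one end.
\item Hence a nonzero class yields a closed, connected, two-sided, non-separating surface $S$.
\item Let $\widehat M$ be the connected double cover defined by intersection number with $S$ modulo $2$. It contains two lifts $S_0,S_1$, and $\widehat M\setminus(S_0\cup S_1)$ consists of two copies of the noncompact set $M\setminus S$. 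So $\widehat M$ has at least two ends.
\item The lifted $u$ still satisfies $-\frac23\Delta\widehat u+\Ric_{\widehat M}\widehat u\ge0$. Thus \cite{APX} gives $\widehat M\cong P'\times\R$ with $P'$ compact, which is parabolic.
\item Lifted stability then gives $\widehat q\equiv0$, hence $q\equiv0$.
\end{enumerate}
This is exactly the mechanism of the paper's Cases~2--3. There it is packaged as \cite[Corollary~3.11]{CMMR} (if $M$ and all its connected twofold normal covers are one-ended, then $H^1_c(M)=0$), followed by Poincar\'e duality.
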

	
	\begin{proof}
		We follow Corollary~3.11 of
		Catino--\hspace{0pt}Mari--\hspace{0pt}Mastrolia--\hspace{0pt}Roncoroni \cite{CMMR}.
		First observe that all analytic information lifts to a covering
		$\pi:\widehat M\to M$: the function $\widehat u=u\circ\pi$ satisfies the
		lifted Jacobi equation, while stability and the intrinsic curvature
		inequalities lift to $\widehat M$.
		
		\smallskip
		\noindent\emph{Case 1: $M$ has at least two ends.}
		The spectral condition \eqref{eq:spectral-ricci-nonnegative} has parameter
		$2/3<2$.
		The sharp spectral splitting theorem of Antonelli--Pozzetta--Xu \cite{APX}
		gives
		\begin{equation*}
			M\cong N^2\times\R,
		\end{equation*}
		with $N$ compact and $\Ric_N\geq0$.
		Hence $M$ has linear volume growth and is parabolic.
		For parabolic cutoffs $\eta_j\to1$ locally,
		\begin{equation*}
			\int_M|\nabla\eta_j|^2\longrightarrow0.
		\end{equation*}
		Using $\eta_j$ in \eqref{eq:stability} yields $q\equiv0$.
		
		\smallskip
		\noindent\emph{Case 2: $M$ has one end, but a connected twofold normal
			cover $\widehat M$ has two ends.}
		Applying the preceding splitting argument upstairs gives
		\begin{equation*}
			\widehat M=P'\times\R,
		\end{equation*}
		where $P'$ is compact.
		Since the quotient $M$ has only one end, the deck involution must exchange
		the two ends of the product.
		It therefore has the form
		\begin{equation*}
			\tau(t,y)=(-t+a,f(y)),
		\end{equation*}
		for an isometric involution $f$ of $P'$.
		Thus the quotient is precisely the determinant-line bundle described by
		Catino--Mari--Mastrolia--Roncoroni \cite[Corollary~3.11]{CMMR}; locally its
		metric is $dt^2+g_{P'}$ in the fibre direction.
		The product cover is parabolic.
		Indeed, if $\chi_R(t)=1$ for $|t|\leq R$, $\chi_R(t)=0$ for
		$|t|\geq2R$, and $|\chi_R'|\leq2/R$, then
		\begin{equation*}
			\int_{\widehat M}|\nabla\chi_R|^2
			\leq\frac{C\Area(P')}{R}\longrightarrow0.
		\end{equation*}
		Stability on $\widehat M$ consequently gives
		\begin{equation*}
			\int_{\widehat M}\widehat q\,\chi_R^2
			\leq\int_{\widehat M}|\nabla\chi_R|^2\longrightarrow0.
		\end{equation*}
		Thus $\widehat q\equiv0$, and the local-isometry property of the covering
		implies $q\equiv0$ on $M$.
		
		\smallskip
		\noindent\emph{Case 3: $M$ and every connected twofold normal cover are
			one-ended.}
		The remaining topological alternative in
		\cite[Corollary~3.11]{CMMR} gives
		\begin{equation*}
			H_c^1(M)=0.
		\end{equation*}
		Since $M$ is now oriented, the integral codimension-one conclusion in the
		same corollary, equivalently Poincar\'e duality, yields
		\begin{equation*}
			H_2(M;\Z)=0.
		\end{equation*}
		This is exactly the nontrivial branch in the statement.
	\end{proof}
	
	Henceforth we work in the nontrivial case
	\begin{equation}\label{eq:working-topology}
		M\text{ is orientable and one-ended},
		\qquad H_2(M;\Z)=0.
	\end{equation}
	One consequence of \eqref{eq:working-topology} used below is that every
	closed oriented surface in $M$ separates.
	This is enough to apply $\mu$-bubble theory later.
	
	The following proposition uses the two spectral inequalities satisfied by
	the same Jacobi function $u$ to give a stronger conclusion of independent
	interest.
	
	\begin{proposition}\label{prop:R3-refinement}
		Assume that $q\not\equiv0$, and make the orientable-cover reduction in
		\Cref{prop:topological-reduction}.
		Then the resulting orientable
		three-manifold is diffeomorphic to $\R^3$.
	\end{proposition}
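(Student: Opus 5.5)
Our plan is to combine the positive spectral bi-Ricci bound \eqref{eq:spectral-biric-positive} with the topological information \eqref{eq:working-topology}, and to finish with geometrization. Since \eqref{eq:spectral-biric-positive} is a uniformly positive spectral scalar curvature condition with parameter $\gamma=1<2$ (in dimension three), warped $\mu$-bubble theory (as in Chodosh--Li and the spectral versions used in \cite{CMMR,APX}) produces an exhaustion of $M$ by compact domains $\Omega_1\subset\Omega_2\subset\cdots$. Each boundary $\partial\Omega_k$ consists of spheres, each of diameter at most $C(\kappa)$. The sphere conclusion comes from the Gauss--Bonnet argument applied to the weighted second variation with weight $u^{\gamma}$: the positive lower bound $\kappa/2$ forces every component of a stable warped $\mu$-bubble to have positive Euler characteristic. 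Because $M$ is orientable and $H_2(M;\Z)=0$, every such sphere separates, which is what the $\mu$-bubble construction needs. We then follow the standard scheme: $M$ is a union of compact pieces glued along $2$-spheres.

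Next we identify the pieces. Fix $k$. The region between $\partial\Omega_k$ and $\partial\Omega_{k+1}$, with its boundary spheres capped off by balls, is a closed orientable $3$-manifold $Y_k$. We claim $Y_k$ carries a metric of positive scalar curvature, or at least satisfies the spectral condition in a form for which the classification of Gromov--Lawson and Schoen--Yau, via Perelman, applies. The cleanest route we see is to observe that spectral positive scalar curvature with $\gamma<2$ can be conformally changed on compact pieces: replace $g$ by $u^{2\gamma}$-type warped metrics on $\Omega_{k+1}\times S^1$ (the Fischer-Colbrie--Schoen trick) to obtain a genuine positive scalar curvature metric in one dimension higher. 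By Schoen--Yau / Gromov--Lawson, the prime factors of $Y_k$ are then spherical space forms and $S^2\times S^1$. Equivalently, and this is how we would phrase it, $M$ is a possibly infinite connected sum of spherical space forms and copies of $S^2\times S^1$, with one end.

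Finally we use $H_2(M;\Z)=0$ and one-endedness to eliminate all nontrivial summands. An $S^2\times S^1$ summand gives a non-separating sphere, hence a nonzero class in $H_2$ by Poincar\'e--Lefschetz duality, which is excluded. Spherical space form summands $S^3/\Gamma$ with $\Gamma\neq 1$ would contribute torsion to $\pi_1$. To rule them out, we use the second spectral inequality \eqref{eq:spectral-ricci-nonnegative} together with the splitting results. Pass to the universal cover $\widetilde M$, which inherits both spectral inequalities. If $\Gamma\neq1$ occurred, then $\widetilde M$ would be a connected sum of infinitely many copies of $S^3$ and $\R^3$ pieces arranged along a tree with $|\pi_1|\geq 2$, so it would have at least two ends (indeed infinitely many, or two when $\pi_1$ is finite but $M$ is noncompact). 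The splitting theorem of \cite{APX} under \eqref{eq:spectral-ricci-nonnegative} would then force $\widetilde M\cong N\times\R$, and parabolicity would give $q\equiv0$ exactly as in Case 1 of \Cref{prop:topological-reduction}, a contradiction. Hence $\pi_1(M)=1$, and all capped pieces are homotopy spheres, hence $S^3$ by Perelman. So $M$ is an increasing union of punctured $3$-balls. With one end and each $\partial\Omega_k$ a single separating sphere (after discarding bounded components, which bound balls), $M$ is an increasing union of balls, hence diffeomorphic to $\R^3$.

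The main obstacle is the middle step: converting the spectral condition on noncompact pieces into an honest positive scalar curvature statement that Perelman's classification can be applied to. We would first try the warped-product trick on $\Omega_{k+1}\times S^1$ with the metric $g+u^{2}\,d\theta^2$, which has scalar curvature $R_M-2u^{-1}\Delta u\geq\kappa$. We would then cap it off using the $\mu$-bubble boundary, whose mean-curvature sign allows a Gromov--Lawson-type doubling.
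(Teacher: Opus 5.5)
The gap is your middle step, and everything after it depends on it. You need each capped piece $Y_k$ (and hence $M$) to be a connected sum of spherical space forms and copies of $S^2\times S^1$, but the mechanism you propose does not produce this. Your warped-product computation is correct: $g+u^2d\theta^2$ has scalar curvature $R_M-2u^{-1}\Delta u\geq\kappa$. On a \emph{closed} $3$-manifold you would not even need the $S^1$ trick, since positivity of $-\Delta+\frac12R$ already implies positivity of the conformal Laplacian $-8\Delta+R$.

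The problem is that $Y_k$ is not a subset of $M$. The caps are not part of $M$, so neither $u$ nor the spectral inequality extends over them. Positive scalar curvature on a compact manifold with nonempty boundary, with no boundary condition, imposes no topological restriction: every such manifold carries a metric of this kind. Gromov--Lawson-type doubling or capping needs a lower bound on the boundary mean curvature, such as mean convexity. A warped $\mu$-bubble only satisfies the weighted equation $H+\partial_\nu\log u=h$ of \eqref{eq:bubble-EL}, where $h$ runs from $+\infty$ to $-\infty$ across the annulus. So neither $H$ nor $\partial_\nu\log u$ has a sign at the bubble. What you are really invoking is a spectral version of the classification of complete $3$-manifolds with uniformly positive scalar curvature as infinite connected sums of spherical space forms and $S^2\times S^1$. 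Even in the pointwise case this is a deep theorem, and your proposal neither proves nor cites a spectral analogue.

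Granting that classification, your remaining steps are reasonable. An $S^2\times S^1$ summand gives a sphere meeting a closed loop once, hence a nonzero class in $H_2(M;\Z)$. A nontrivial finite-group summand should make the universal cover have at least two ends, after which \cite{APX} and parabolicity give $q\equiv0$. That claim needs an actual argument, for example counting components of the preimage of an end neighbourhood via the Bass--Serre tree.

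The paper avoids the classification entirely. It observes that the same $u$ satisfies both $-\Delta u+\Ric_Mu\geq\frac13qu\geq0$ and $-\Delta u+\frac12R_Mu\geq\frac\kappa2u$. It then applies the spectral Liu-type theorem of Chai--Sun \cite{ChaiSun}, which gives either $M\cong\R^3$ or $\widetilde M=P^2\times\R$ with $K_P\geq0$. The product case is excluded in two sub-cases. If $P$ is compact, $\widetilde M$ is parabolic and $q\equiv0$. If $P$ is noncompact, Cohn--Vossen and quadratic area growth give $\lambda_1(-\Delta_{P\times\R}+K_P)=0$. This contradicts the lifted positive supersolution with bound $\kappa/2$, using $R_{P\times\R}=2K_P$. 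That spectral-Liu step is the idea that replaces your missing classification.
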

	
	\begin{proof}
		The nontrivial case is noncompact, since on a compact stable
		hypersurface the test function $1$ and $q\geq0$ would give $q\equiv0$.
		Let $u>0$ be the Jacobi function.
		By
		\eqref{eq:rho-control} and $-\Delta u=qu$,
		\begin{equation*}
			\begin{split}
				-\Delta u+\Ric_Mu
				&=\bigl(q+\Ric_M\bigr)u\geq\frac13qu\geq0.
			\end{split}
		\end{equation*}
		The same function satisfies
		\begin{equation}\label{eq:liu-spectral-scalar}
			-\Delta u+\frac12R_Mu
			\geq\frac\kappa2u>0
		\end{equation}
		by \eqref{eq:spectral-biric-positive}.
		Thus the two hypotheses in the
		spectral Liu-type theorem of Chai--Sun
		\cite[Theorem~1.25 and Remark~1.26]{ChaiSun} hold for the same $u$, with
		the common parameter $\gamma=1<2$.
		It follows that either $M\cong\R^3$, or its universal Riemannian cover splits isometrically as
		\begin{equation*}
			\widetilde M=P^2\times\R,
			\qquad K_P\geq0,
		\end{equation*}
		where $P$ is complete.
		We show that the splitting alternative forces
		$q\equiv0$.
		
		All the analytic inequalities lift to $\widetilde M$.
		If $P$ is compact, then $P\times\R$ is parabolic.
		Taking cutoffs depending only on the
		$\R$-coordinate in the lifted stability inequality gives
		$\widetilde q\equiv0$,
		and hence $q\equiv0$, a contradiction.
		
		Suppose instead that $P$ is noncompact.
		Since the universal cover is simply connected, $P$ is simply connected.
		By the Cohn--Vossen total-curvature theorem \cite{CohnVossen} and the
		Calabi--Yau volume theorem \cite{Yau76},
		\begin{equation*}
			\int_P K_P\leq2\pi,
			\qquad
			\Area_P(B_R)\longrightarrow\infty.
		\end{equation*}
		Moreover, Bishop--Gromov gives $\Area_P(B_R)\leq CR^2$.
		Choose a distance cutoff $\zeta_R$ on $P$ which equals $1$ on $B_R$,
		vanishes outside $B_{2R}$, and satisfies $|\nabla\zeta_R|\leq2/R$.
		Then
		\begin{equation*}
			\frac{\displaystyle
				\int_P\bigl(|\nabla\zeta_R|^2+K_P\zeta_R^2\bigr)}
			{\displaystyle\int_P\zeta_R^2}
			\longrightarrow0.
		\end{equation*}
		Combining $\zeta_R$ with increasingly long one-dimensional cutoffs on the
		line factor shows
		\begin{equation}\label{eq:product-bottom-zero}
			\lambda_1\left(-\Delta_{P\times\R}+K_P\right)=0.
		\end{equation}
		On the other hand, the lift of \eqref{eq:liu-spectral-scalar}, together
		with $R_{P\times\R}=2K_P$, supplies a positive supersolution of
		\begin{equation*}
			-\Delta_{P\times\R}+K_P\geq\frac\kappa2.
		\end{equation*}
		Therefore,
		\begin{equation*}
			\lambda_1\left(-\Delta_{P\times\R}+K_P\right)
			\geq\frac\kappa2,
		\end{equation*}
		contradicting \eqref{eq:product-bottom-zero}.
		Thus the splitting
		case is impossible when $q\not\equiv0$, and the spectral Liu-type
		theorem yields $M\cong\R^3$.
	\end{proof}

	\section{Separators from warped \texorpdfstring{$\mu$}{mu}-bubbles}
	\label{sec:bubble-separators}
	
	We now construct the outer boundaries for the harmonic function identity.
	The construction is the $n=3$, $\gamma=1$ case of the warped $\mu$-bubble
	method; see, for example, Antonelli--Xu \cite{AX}.
	We retain the terms involving the normal derivative of $u$ in the second
	variation formula for the $\mu$-bubble functional.
	
	In this section, $D$ denotes the Levi--Civita connection of the ambient
	three-manifold $M$, whereas $\nabla$ denotes the induced Levi--Civita
	connection on the surface $\Sigma$.
	We use $\Delta_M$ and
	$\Delta_\Sigma$ for their respective Laplacians.
	
	Let $K\subset M$ be a large connected compact set.
	Choose smooth precompact domains
	\begin{equation*}
		K\Subset\Omega_-\Subset\Omega_+\Subset M,
		\qquad
		N:=\overline{\Omega_+\setminus\Omega_-},
	\end{equation*}
	and fix a smooth reference domain $\Omega_0$ with
	$\Omega_-\Subset\Omega_0\Subset\Omega_+$.
	Let $h\in C^\infty(\operatorname{int}N)$ tend uniformly to $+\infty$ at
	$\partial\Omega_-$ and to $-\infty$ at $\partial\Omega_+$.
	For every Caccioppoli set $\Omega$ satisfying
	$\Omega\mathbin{\triangle}\Omega_0\Subset\operatorname{int}N$, define
	\begin{equation}\label{eq:bubble-energy}
		\mathcal E(\Omega):=\int_{\partial_N^*\Omega}u\,\dd A
		-\int_N(\chi_\Omega-\chi_{\Omega_0})hu\,\dd V,
		\qquad
		\partial_N^*\Omega:=\partial^*\Omega\cap\operatorname{int}N.
	\end{equation}
	The direct method, together with the two barrier conditions on $h$, produces
	a minimizer $\Omega$ whose relative reduced boundary stays a positive distance
	from $\partial N$; see \cite{Zhu-width-estimate}.
	In dimension three, standard regularity makes this boundary a smooth closed
	surface, possibly with several connected components.
	Let $\Sigma$ be a connected component and let $\nu$ be its outward normal.
	
	The first variation of $\mathcal E$ at $\Omega$ gives the critical equation
	\begin{equation}\label{eq:bubble-EL}
		H+\partial_\nu\log u=h.
	\end{equation}
	
	\begin{lemma}\label{lem:bubble-estimate}
		Fix $\alpha\in(0,1/2)$.
		The function $h$ may be chosen so that
		\begin{equation}\label{eq:h-gradient}
			|Dh|\leq \alpha h^2+\frac\kappa4.
		\end{equation}
		On the connected separating component $\Sigma$,
		\begin{equation}\label{eq:coercive-separator}
			\begin{split}
				\frac\kappa4\Area(\Sigma)
				&+\left(\frac12-\alpha\right)\int_\Sigma h^2
				+\frac12\int_\Sigma|\partial_\nu\log u|^2
				+\frac34\int_\Sigma|\nabla\log u|^2
				\leq4\pi.
			\end{split}
		\end{equation}
		Consequently,
		\begin{equation}\label{eq:H-bound}
			\int_\Sigma
			\left(H^2+h^2+|\partial_\nu\log u|^2\right)
			\leq C(\alpha).
		\end{equation}
	\end{lemma}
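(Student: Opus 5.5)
The plan is to run the standard warped $\mu$-bubble second-variation argument while keeping every square term. Then we integrate the Gauss curvature using Gauss--Bonnet on the connected closed surface $\Sigma$.

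\emph{Choice of $h$.} Since $N$ is compact, we fix a smooth function $\rho$ on $N$ that is comparable to the distance to $\partial\Omega_-$, has $|D\rho|\leq1$ after smoothing, and width $L=\operatorname{dist}(\partial\Omega_-,\partial\Omega_+)$. We take $h$ to be a rescaled cotangent profile $h=-\frac{1}{\alpha'}\tan(\cdot)$ in $\rho$, with parameters chosen so that $h'=-(\alpha h^2+\kappa/4)$ along the normalized distance. This blows up to $\pm\infty$ at the two boundaries provided the width $L$ equals $\pi/\sqrt{\alpha\kappa/4}$ (or is at least of that size, after reparametrizing $\rho$ so that $|D\rho|\le1$). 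Hence \eqref{eq:h-gradient} holds.

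\emph{Second variation.} For a normal variation $\varphi\nu$ of $\Sigma$, stability of the minimizer gives
\begin{equation*}
0\leq\int_\Sigma\Bigl(|\nabla\varphi|^2u-\bigl(|A_\Sigma|^2+\Ric_M(\nu,\nu)\bigr)u\varphi^2+(\Delta_M u-\Delta_\Sigma u-H\partial_\nu u)\varphi^2-\partial_\nu(hu)\varphi^2+\dots\Bigr).
\end{equation*}
Precisely, differentiating $\int_\Sigma u(H+\partial_\nu\log u-h)\varphi$ gives terms involving $D^2u(\nu,\nu)=\Delta_Mu-\Delta_\Sigma u-H\partial_\nu u$. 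We take $\varphi=u^{-1/2}$, the $\gamma=1$ warped choice, so that the integrand becomes a function times $1$ together with gradient terms in $\log u$.

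We then substitute $\Delta_M u\leq (\frac12R_M-\frac\kappa2)u$ from \eqref{eq:spectral-biric-positive}, and use the traced Gauss equation $R_M=2\Ric_M(\nu,\nu)+R_\Sigma+|A_\Sigma|^2-H^2$ with $R_\Sigma=2K_\Sigma$. We use \eqref{eq:bubble-EL} to replace $H$ by $h-\partial_\nu\log u$, and $|Dh|\geq-\partial_\nu h$. After completing squares, carefully rather than discarding them, we expect to obtain
\begin{equation*}
\int_\Sigma\Bigl(\tfrac\kappa4+(\tfrac12-\alpha)h^2+\tfrac12|\partial_\nu\log u|^2+\tfrac34|\nabla\log u|^2\Bigr)\leq\int_\Sigma K_\Sigma .
\end{equation*}
This uses $|A_\Sigma|^2\geq H^2/2$, the cross term $h\,\partial_\nu\log u$ absorbed via $H^2=(h-\partial_\nu\log u)^2$, and $\int\Delta_\Sigma\log u=0$. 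Gauss--Bonnet gives $\int_\Sigma K_\Sigma=2\pi\chi(\Sigma)\leq4\pi$, which proves \eqref{eq:coercive-separator}.

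\emph{Main obstacle.} The main obstacle is the bookkeeping of the coefficients: checking that the $\gamma=1$ warping exponent leaves exactly $\frac12$ in front of $h^2$ and $|\partial_\nu\log u|^2$, and $\frac34$ in front of $|\nabla\log u|^2$, with nonnegative leftovers. If the direct choice $\varphi=u^{-1/2}$ does not close, I would try $\varphi=u^{-\beta}$ and optimize $\beta$, then use Cauchy--Schwarz on the cross terms.

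\emph{Deducing \eqref{eq:H-bound}.} Since $\alpha<\frac12$, \eqref{eq:coercive-separator} bounds $\int_\Sigma h^2$ and $\int_\Sigma|\partial_\nu\log u|^2$ by $C(\alpha)$. Then $H^2\leq2h^2+2|\partial_\nu\log u|^2$ by \eqref{eq:bubble-EL}, which gives \eqref{eq:H-bound}.
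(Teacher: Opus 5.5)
Your proposal is correct and follows the paper's proof. You test the second variation with $\varphi=u^{-1/2}$, apply the Gauss equation and $|A|^2\ge H^2/2$, substitute $H=h-\partial_\nu\log u$ and absorb the leftover cross term $\frac12 h\,\partial_\nu\log u$ by Cauchy--Schwarz, then use Gauss--Bonnet; the coefficients close exactly as stated, so no $u^{-\beta}$ optimization is needed, and your use of $\Delta_M u\le(\frac12R_M-\frac\kappa2)u$ and your exact Riccati profile for $h$ are equivalent to the paper's use of $\Delta_M u=-qu$ with $R_M+2q\ge\kappa$ and its choice $h=\lambda\cot(\sigma s)$.
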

	
	\begin{proof}
		Let $A$ denote the second fundamental form of $\Sigma\subset M$.
		For a normal variation with speed $\varphi$, the second variation of
		$\mathcal E$ at $\Omega$ is
		\begin{equation*}
			\begin{split}
				0\leq Q(\varphi):={}&
				\int_\Sigma u\Bigl\{ |\nabla\varphi|^2
				-\bigl[\Ric_M(\nu,\nu)+|A|^2
				-D^2\log u(\nu,\nu)+\partial_\nu h\bigr]
				\varphi^2\Bigr\}.
			\end{split}
		\end{equation*}
		Let $\varphi=u^{-1/2}$.
		Since $u\varphi^2=1$ and
		$u|\nabla\varphi|^2=\frac14|\nabla\log u|^2$, we obtain
		\begin{equation*}
			\begin{split}
				0\leq Q(u^{-1/2})
				=\int_\Sigma\Bigl\{
				&\frac14|\nabla\log u|^2
				-\Ric_M(\nu,\nu)-|A|^2\\
				&+D^2\log u(\nu,\nu)-\partial_\nu h
				\Bigr\}.
			\end{split}
		\end{equation*}
		The tangential-normal decomposition of the Laplacian and the Jacobi equation
		give
		\begin{equation*}
			\begin{split}
				D^2\log u(\nu,\nu)
				&={}\Delta_M\log u-\Delta_\Sigma\log u
				-H\partial_\nu\log u,\\
				\Delta_M\log u
				&={}u^{-1}\Delta_M u-|D\log u|^2
				=-q-|D\log u|^2.
			\end{split}
		\end{equation*}
		Because $\Sigma$ is closed, the integral of
		$\Delta_\Sigma\log u$ vanishes.
		Hence
		\begin{equation*}
			\begin{split}
				0\leq \int_\Sigma\Bigl[{}
				&-\Ric_M(\nu,\nu)-|A|^2-q
				-\frac34|\nabla\log u|^2\\
				&-|\partial_\nu\log u|^2
				-H\partial_\nu\log u-\partial_\nu h
				\Bigr].
			\end{split}
		\end{equation*}
		The Gauss equation for $\Sigma\subset M$ is
		\begin{equation*}
			-\Ric_M(\nu,\nu)-|A|^2
			=K_\Sigma-\frac12R_M-\frac12(H^2+|A|^2).
		\end{equation*}
		Since $|A|^2\geq H^2/2$ in dimension two,
		$R_M/2+q\geq\kappa/2$ by \eqref{eq:scalar-gauss}, and
		$-\partial_\nu h\leq|Dh|$, the preceding identity implies
		\begin{equation*}
			\begin{split}
				0\leq \int_\Sigma\Bigl[{}
				&-\frac\kappa2+K_\Sigma+|Dh|
				-\frac34H^2-|\partial_\nu\log u|^2\\
				&-H\partial_\nu\log u
				-\frac34|\nabla\log u|^2
				\Bigr].
			\end{split}
		\end{equation*}
		Finally, substituting $H=h-\partial_\nu\log u$ from
		\eqref{eq:bubble-EL} gives
		\begin{equation*}
			\begin{split}
				&-\frac34H^2-|\partial_\nu\log u|^2
				-H\partial_\nu\log u\\
				&\hspace{2cm}={}-\frac34h^2
				+\frac12h\,\partial_\nu\log u
				-\frac34|\partial_\nu\log u|^2\\
				&\hspace{2cm}\le{}-\frac12h^2
				-\frac12|\partial_\nu\log u|^2,
			\end{split}
		\end{equation*}
		where we have used the Cauchy--Schwarz inequality.
		
		Combining these calculations yields
		\begin{equation}\label{eq:separator-integral}
			0\leq
			\int_\Sigma
			\left[
			-\frac\kappa2+K_\Sigma+|Dh|
			-\frac12h^2
			-\frac12|\partial_\nu\log u|^2
			-\frac34|\nabla\log u|^2
			\right].
		\end{equation}
		
		Using \eqref{eq:h-gradient}, \eqref{eq:separator-integral}, and the Gauss--Bonnet
		theorem,
		\begin{equation*}
			\int_\Sigma K_\Sigma=2\pi\chi(\Sigma)\leq4\pi,
		\end{equation*}
		we obtain \eqref{eq:coercive-separator}.
		This estimate first gives
		\begin{equation*}
			\int_\Sigma h^2
			\leq\frac{4\pi}{\frac12-\alpha},
			\qquad
			\int_\Sigma|\partial_\nu\log u|^2\leq8\pi.
		\end{equation*}
		On the other hand, \eqref{eq:bubble-EL} gives
		$H=h-\partial_\nu\log u$, and hence
		\begin{equation*}
			H^2=\left(h-\partial_\nu\log u\right)^2
			\leq2\left(h^2+|\partial_\nu\log u|^2\right)
		\end{equation*}
		pointwise on $\Sigma$.
		Consequently,
		\begin{equation*}
			\begin{split}
				\int_\Sigma
				\left(H^2+h^2+|\partial_\nu\log u|^2\right)
				&\leq3\int_\Sigma
				\left(h^2+|\partial_\nu\log u|^2\right)\leq\frac{12\pi}{\frac12-\alpha}+24\pi,
			\end{split}
		\end{equation*}
		which proves \eqref{eq:H-bound}.
		
		To obtain \eqref{eq:h-gradient}, take a smooth $2$-Lipschitz approximation
		$s$ of the distance from $K$ and put $h=\lambda\cot(\sigma s)$ on a fixed
		finite annulus.
		Choosing $\sigma/\lambda$ sufficiently small gives
		$|Dh|\leq\alpha h^2+\kappa/4$ without any curvature estimate.
		The poles of the cotangent provide the barrier at the two faces.
	\end{proof}
	
	Applying the construction with $K\supset B_R(o)$ produces connected
	surfaces $\Sigma_R$ escaping to infinity and satisfying
	\eqref{eq:coercive-separator} uniformly in $R$.
	The corresponding minimizer of $\mathcal E$ can be filled across all
	bounded complementary components.
	Since $M$ has one end and $H_2(M;\Z)=0$, the resulting outer boundary is
	connected.
	
	\section{The harmonic function identity}\label{sec:harmonic-identity}
	
	Throughout this section, $D$ denotes the Levi--Civita connection and gradient
	on $M$, while $\nabla$ denotes the intrinsic connection and gradient on a
	regular level surface of the capacitor.
	Thus
	$\Delta=\operatorname{tr}_g D^2$.
	
	Fix a connected smooth inner boundary $\Gamma$ enclosing a compact set.
	Let $D_R$ be the compact region bounded by $\Gamma$ and $\Sigma_R$, and
	let $v=v_R$ solve
	\begin{equation}\label{eq:capacitor}
		\Delta v=0\quad\text{in }D_R,
		\qquad
		v=1\quad\text{on }\Gamma,
		\qquad
		v=0\quad\text{on }\Sigma_R.
	\end{equation}
	
	\begin{lemma}\label{lem:connected-levels}
		Every regular level
		\begin{equation*}
			L_t:=\{x\in D_R:v_R(x)=t\},
			\qquad 0<t<1,
		\end{equation*}
		is a connected closed surface.
	\end{lemma}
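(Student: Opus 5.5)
Closedness of a regular level is immediate: $v$ equals $1$ on $\Gamma$ and $0$ on $\Sigma_R$, so for $0<t<1$ the set $L_t$ is compact and lies in the interior of $D_R$. Since $t$ is a regular value, $L_t$ is a smooth embedded closed surface. Because $M$ is orientable, $L_t$ is two-sided, hence orientable. The substance of the lemma is connectedness, and I would prove it by combining the maximum principle with the topological input \eqref{eq:working-topology}, namely $H_2(M;\Z)=0$ and that $\Sigma_R$ and $\Gamma$ are connected.

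Suppose $L_t$ has components $S_1,\dots,S_k$ with $k\ge2$. By $H_2(M;\Z)=0$ each $S_i$ is null-homologous. Hence each $S_i$ separates $M$ and bounds a compact region $B_i$. Indeed, $M\setminus S_i$ has exactly two components, and since $M$ has one end exactly one of them is bounded. Each $B_i$ either contains $\Gamma$ (and thus the compact set enclosed by $\Gamma$) or is disjoint from it. The latter alternative is handled as follows.

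\emph{Step 1.} If $B_i\cap\Gamma=\emptyset$, I also check that $B_i$ avoids $\Sigma_R$. Otherwise $B_i$, being bounded by $S_i\subset\operatorname{int}D_R$, would contain all of the connected surface $\Sigma_R$, hence the whole bounded side of $\Sigma_R$, which contains $\Gamma$; this is a contradiction. So $B_i\subset D_R$ and $v=t$ on $\partial B_i=S_i$. Since $v$ is harmonic, $v\equiv t$ on $B_i$ by the maximum principle. Unique continuation then gives $v\equiv t$ on $D_R$, contradicting the boundary values. Therefore every $B_i$ contains $\Gamma$.

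\emph{Step 2.} The $B_i$ are nested. Two compact domains, both containing $\Gamma$ and with disjoint boundaries, must satisfy $B_i\subset B_j$ or $B_j\subset B_i$. After relabeling, $B_1\subsetneq B_2$. Then the region $B_2\setminus\overline{B_1}$ is bounded by $S_1\cup S_2$, lies in $D_R$, and $v=t$ on its whole boundary. The maximum principle again forces $v\equiv t$ there, and this is impossible as before. Hence $k=1$.

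The main obstacle is the separation step: making sure that each component bounds a compact region and that this region sits on the correct side relative to $\Gamma$ and $\Sigma_R$. This is exactly where $H_2(M;\Z)=0$ (every closed oriented surface separates), one-endedness, and the connectedness of $\Sigma_R$ and $\Gamma$ from \Cref{sec:bubble-separators} are used. I would phrase it carefully via intersection number with paths. A path from $\Gamma$ to $\Sigma_R$ inside $D_R$ crosses a separating $S_i$ an odd number of times exactly when $S_i$ separates $\Gamma$ from $\Sigma_R$. The maximum principle argument then rules out any component that fails to separate them.
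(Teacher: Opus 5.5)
Your proof is correct and follows the paper's route. $H_2(M;\Z)=0$ makes each component of $L_t$ separate $M$, the maximum principle with unique continuation rules out a component that fails to separate $\Gamma$ from $\Sigma_R$, and the maximum principle on the compact region between two nested components rules out a second component. The only difference is cosmetic: you organize the argument around the bounded side $B_i$, which uses one-endedness, while the paper just notes that a side of a component containing neither $\Gamma$ nor $\Sigma_R$ must lie inside $D_R$. In Step~2 you should also say briefly why $B_2$ misses $\Sigma_R$; this follows from $S_2\subset\operatorname{int}D_R$.
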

	
	\begin{proof}
		Let $C$ be a connected component of $L_t$.
		Regularity makes $C$ a smooth closed two-sided surface.
		Since $M$ is oriented and
		$H_2(M;\Z)=0$, the surface $C$ separates $M$.
		
		We first claim that $C$ separates the two boundary components $\Gamma$
		and $\Sigma_R$.
		Otherwise they lie on the same side of $C$, and the
		other side cuts out a compact subdomain $\Omega\Subset D_R$ whose boundary
		is contained in $C$.
		Since $v_R=t$ on $\partial\Omega$, the maximum and minimum principles give
		$v_R\equiv t$ on $\Omega$.
		This contradicts both
		the unique continuation principle and the boundary values in
		\eqref{eq:capacitor}.
		
		Suppose now that $L_t$ has two distinct components $C_1$ and $C_2$.
		Both separate $\Gamma$ from $\Sigma_R$.
		Because they are disjoint and
		connected, they are ordered between the two boundary components: one
		lies on the $\Gamma$-side of the other.
		The closure of the region between
		them is compact, and every component of its boundary is contained in
		$L_t$.
		Thus $v_R=t$ on the entire boundary of this region.
		Applying the
		maximum and minimum principles once more forces $v_R\equiv t$ there,
		again a contradiction.
		Hence $L_t$ has only one component.
	\end{proof}
	
	The following is the harmonic specialization of Munteanu--Wang
	\cite[Lemma~2.9 and (2.19)]{MW24}; see also
	\cite[Theorem~4.1]{MW}.
	All calculations are performed on the regular set of $v$.
	\begin{lemma}\label{lem:bochner-gauss}
		On a regular level of $v$,
		\begin{equation}\label{eq:bochner-gauss}
			\Delta|Dv|
			\geq
			\frac12(R_M-R_t)|Dv|
			+\frac34\frac{|D|Dv||^2}{|Dv|},
		\end{equation}
		where $R_t=2K_{\{v=t\}}$ is the scalar curvature of the level surface.
		
	\end{lemma}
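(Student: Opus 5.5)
The plan is to combine the Bochner formula for the harmonic function $v$, a refined Kato inequality for harmonic functions, and the Gauss equation on the level sets. Everything is computed on the regular set $\{Dv\neq0\}$.

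First, since $\Delta v=0$, Bochner's formula on $M$ gives
\begin{equation*}
\frac12\Delta|Dv|^2=|D^2v|^2+\Ric_M(Dv,Dv).
\end{equation*}
Writing $\frac12\Delta|Dv|^2=|Dv|\Delta|Dv|+|D|Dv||^2$, we get
\begin{equation*}
|Dv|\Delta|Dv|=|D^2v|^2-|D|Dv||^2+\Ric_M(Dv,Dv).
\end{equation*}
Set $n=Dv/|Dv|$, the unit normal to the level set $\{v=t\}$. The next step is to rewrite $\Ric_M(n,n)$ through the traced Gauss equation for the level surface:
\begin{equation*}
\Ric_M(n,n)=\frac12R_M-\frac12R_t+\frac12\bigl(H^2-|A|^2\bigr),
\end{equation*}
where $A$ and $H$ are the second fundamental form and mean curvature of the level set. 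This is the same identity already used in the proof of \Cref{lem:bubble-estimate}.

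Next, express $A$, $H$ and $D|Dv|$ through $D^2v$ in an adapted frame $e_1,e_2$ tangent to the level set with $e_3=n$. We have $A_{ij}=v_{ij}/|Dv|$ for $i,j\leq2$, and $H=-v_{33}/|Dv|$ by harmonicity. Also $D|Dv|=(v_{13},v_{23},v_{33})$. Hence
\begin{equation*}
|D^2v|^2=|Dv|^2|A|^2+2|\nabla|Dv||^2+v_{33}^2,
\qquad
|D|Dv||^2=|\nabla|Dv||^2+v_{33}^2,
\end{equation*}
where $\nabla|Dv|$ denotes the tangential part. Substituting these, the right-hand side of the Bochner identity becomes
\begin{equation*}
\frac12(R_M-R_t)|Dv|^2+\frac12|Dv|^2\bigl(|A|^2+H^2\bigr)+|\nabla|Dv||^2 .
\end{equation*}

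The main point, and the only nontrivial step, is to show
\begin{equation*}
\frac12|Dv|^2\bigl(|A|^2+H^2\bigr)+|\nabla|Dv||^2\geq\frac34|D|Dv||^2=\frac34\bigl(|\nabla|Dv||^2+|Dv|^2H^2\bigr).
\end{equation*}
The tangential terms are fine, since $1\geq\frac34$. For the normal terms we need $\frac12|A|^2+\frac12H^2\geq\frac34H^2$, which is exactly $|A|^2\geq H^2/2$ on a surface. Both comparisons hold, so the inequality follows; this is the sharp refined Kato constant in dimension three.

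Dividing by $|Dv|$ then yields \eqref{eq:bochner-gauss}. I do not expect any genuine obstacle beyond keeping the sign conventions for $H$ consistent; only $H^2$ enters, so the sign plays no role.
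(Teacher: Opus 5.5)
Your proposal is correct and follows essentially the same route as the paper: the Bochner formula, the traced Gauss equation on the level surface, the adapted-frame identity $|D^2v|^2=|Dv|^2|A|^2+2|\nabla|Dv||^2+H^2|Dv|^2$, and the inequality $|A|^2\geq H^2/2$. The paper first writes the exact identity $\Delta|Dv|/|Dv|=\frac12(R_M-R_t)+\frac12|A^\circ|^2+\frac34H^2+|\nabla\log|Dv||^2$ and then discards $\frac12|A^\circ|^2$ and a quarter of the tangential term, which is the same comparison you make directly.
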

	
	\begin{proof}
		We include the calculation to fix our curvature conventions and to keep
		the nonnegative remainder terms that will be useful below.
		Let $n=Dv/|Dv|$, and let $A$ and $H$ be the second fundamental form and mean
		curvature of the level surface.
		Choose an orthonormal frame $e_1,e_2$ tangent to the level surface.
		Since
		$Dv=|Dv|n$, the components of the Hessian are
		\begin{equation*}
			\begin{split}
				D^2v(e_i,e_j)&=|Dv|A_{ij},\\
				D^2v(e_i,n)&=e_i(|Dv|),\\
				D^2v(n,n)&=D_n|Dv|.
			\end{split}
		\end{equation*}
		Taking the trace and using $\Delta v=0$ gives
		\begin{equation*}
			D_n|Dv|+H|Dv|=0.
		\end{equation*}
		Consequently,
		\begin{equation*}
			|D^2v|^2
			=|Dv|^2|A|^2
			+2|\nabla|Dv||^2
			+H^2|Dv|^2.
		\end{equation*}
		
		The Gauss equation and the trace decomposition of $A$ are
		\begin{equation*}
			\begin{split}
				R_t&=R_M-2\Ric_M(n,n)+H^2-|A|^2,\\
				|A|^2&=|A^\circ|^2+\frac12H^2.
			\end{split}
		\end{equation*}
		The Bochner formula gives
		\begin{equation}
			\frac{\Delta|Dv|}{|Dv|}
			=\frac12(R_M-R_t)
			+\frac12|A^\circ|^2
			+\frac34H^2
			+|\nabla\log|Dv||^2.
		\end{equation}
		Since
		$|D\log|Dv||^2
		=H^2+|\nabla\log|Dv||^2$, we obtain
		\eqref{eq:bochner-gauss}.
	\end{proof}
	
	The Jacobi equation can be written
	\begin{equation}\label{eq:q-logu}
		q=-\Delta\log u-|D\log u|^2.
	\end{equation}
	
	\begin{proposition}\label{prop:capacitor-inequality}
		There is a constant $C$, independent of $R$, such that
		\begin{equation}\label{eq:L1-capacitor}
			\int_{D_R}|Dv_R|\leq C.
		\end{equation}
	\end{proposition}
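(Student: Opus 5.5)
The plan is to run a weighted version of the Munteanu--Wang level-set argument. I would integrate the Bochner--Gauss inequality \eqref{eq:bochner-gauss} over $D_R$, absorb $R_M$ with the spectral scalar bound, and absorb the resulting $q$-term with the Jacobi equation in the form \eqref{eq:q-logu}. By \eqref{eq:scalar-gauss} we have $\frac12R_M+q\geq\frac\kappa2$, so
\begin{equation*}
\frac\kappa2\int_{D_R}|Dv|\leq\int_{D_R}\Bigl(\frac12R_M+q\Bigr)|Dv|.
\end{equation*}
I would bound $\frac12R_M|Dv|$ using \eqref{eq:bochner-gauss} by
\begin{equation*}
\Delta|Dv|+\frac12R_t|Dv|-\frac34\frac{|D|Dv||^2}{|Dv|},
\end{equation*}
and replace $q|Dv|$ by $-\Delta\log u\,|Dv|-|D\log u|^2|Dv|$.

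For the interior terms, I would integrate $-\Delta\log u\,|Dv|$ by parts, producing $\int\langle D\log u,D|Dv|\rangle$ plus a boundary term. The interior integrand is then
\begin{equation*}
\langle D\log u,D|Dv|\rangle-|D\log u|^2|Dv|-\frac34\frac{|D|Dv||^2}{|Dv|}\leq -\frac23|D\log u|^2|Dv|\leq0,
\end{equation*}
by Young's inequality with $ab\leq\frac34 b^2/|Dv|+\frac13a^2|Dv|$. The curvature term $\frac12\int R_t|Dv|=\int_0^1\int_{L_t}K_{L_t}\,\dd t$ follows from the coarea formula. By \Cref{lem:connected-levels} each regular level is connected, so Gauss--Bonnet bounds it by $4\pi$. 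All of this should be done rigorously by replacing $|Dv|$ with $\sqrt{|Dv|^2+\varepsilon^2}$, or by using that the critical set of $v$ has measure zero and singular values are negligible in the coarea formula, and then letting $\varepsilon\to0$.

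What remains are the boundary terms $\int_{\partial D_R}\bigl(\partial_\nu|Dv|-|Dv|\,\partial_\nu\log u\bigr)$, with $\nu$ the outward normal of $D_R$. On $\Gamma$ these involve $D v_R$ and $D^2v_R$ along a fixed surface. Since $v_R$ is monotone in $R$ and squeezed between fixed barriers near $\Gamma$, boundary Schauder estimates give $C^2$ bounds on $\Gamma$ independent of $R$, so this contribution is a fixed constant. On $\Sigma_R$, $\Sigma_R$ is the level $\{v=0\}$ and $\nu=-Dv/|Dv|$. The identity $D_n|Dv|+H_n|Dv|=0$ from the proof of \Cref{lem:bochner-gauss} gives $\partial_\nu|Dv|=-H|Dv|$, with $H$ the mean curvature of $\Sigma_R$ for $\nu$ in the convention of \eqref{eq:bubble-EL}. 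The $\Sigma_R$-term therefore becomes $-\int_{\Sigma_R}(H+\partial_\nu\log u)|Dv|=-\int_{\Sigma_R}h|Dv|$ by \eqref{eq:bubble-EL}; this is the exact cancellation of the normal derivative of $u$.

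The main obstacle is controlling $-\int_{\Sigma_R}h|Dv|$ uniformly in $R$. My first attempt would be to exploit the freedom in $h=\lambda\cot(\sigma s)$: arrange that the minimizer's outer boundary lies where $h\geq0$, for instance by comparison or by shifting the zero of the cotangent toward the outer face, in which case the term has a good sign and can be dropped. Failing that, I would use Cauchy--Schwarz with $\int_{\Sigma_R}h^2\leq C$ from \eqref{eq:H-bound}. Together with the flux identity $\int_{\Sigma_R}|Dv|=\int_\Gamma|Dv|$, which is bounded uniformly, this reduces the problem to a uniform bound on $\int_{\Sigma_R}|Dv|^2$. I would seek that bound from a boundary gradient estimate for $v_R$ along $\Sigma_R$, for example by using a positive multiple of $u$ or a capacitary barrier. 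Granting this, the resulting inequality is $\frac\kappa2\int_{D_R}|Dv_R|\leq 4\pi+C$, which is \eqref{eq:L1-capacitor}.
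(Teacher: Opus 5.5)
\textbf{There is a genuine gap, at the outer boundary term.} Your interior analysis, the Gauss--Bonnet step, the treatment of $\Gamma$, and the cancellation $-\int_{\Sigma_R}(H+\partial_\nu\log u)|Dv|=-\int_{\Sigma_R}h|Dv|$ are all sound. The proof is still incomplete, because neither of your two remedies controls the leftover term $-\int_{\Sigma_R}h|Dv_R|$.

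\emph{Arranging $h\geq0$ on $\Sigma_R$.} Nothing forces this. The position of the $\mu$-bubble comes from a global minimization in which the weight $u$ and the geometry of $M$ are arbitrary, and no comparison principle places it in $\{h\geq0\}$. Moreover, the constraint $|Dh|\leq\alpha h^2+\kappa/4$, together with $h\to-\infty$ at $\partial\Omega_+$, forces $h<0$ on a collar of the outer face of width at least $\pi/\sqrt{\alpha\kappa}$. So you cannot push the zero of $h$ out past the bubble.

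\emph{Cauchy--Schwarz.} This route needs $\int_{\Sigma_R}|Dv_R|^2\leq C$ uniformly in $R$, which is a boundary gradient estimate along $\Sigma_R$. Such estimates depend on the curvature of $\Sigma_R$ and of $M$ near it. That is exactly the bounded-geometry input the theorem is designed to avoid. The flux identity gives only $L^1$ control on $\Sigma_R$. A multiple of $u$ cannot serve as a barrier at $\Sigma_R$, since $u>0$ there.

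\textbf{The missing idea is a weight in the Green identity.} Integrate $\theta(v)\Delta|Dv|-|Dv|\Delta\theta(v)$ with $\theta'<0$.
\begin{enumerate}[label=\textup{(\arabic*)}]
\item On $\Sigma_R$ this produces the extra term $-|Dv|D_\nu\theta=\theta'(0)|Dv|^2\leq0$. The boundary integrand becomes $-\theta h|Dv|+\theta'|Dv|^2$, which is at most $\frac{\theta^2}{4|\theta'|}h^2$.
\item Consequently only $\int_{\Sigma_R}h^2\leq C$ from \Cref{lem:bubble-estimate} is needed, and no gradient bound on $v_R$ is required.
\item The price is a new interior cross term $-\theta'|Dv|\langle Dv,D\log u\rangle$. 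Your own Young inequality leaves over $\frac23\theta|Dv||D\log u|^2$, which you discarded. Completing the square of the cross term against it costs $\frac{3\theta'^2}{8\theta}|Dv|^3$.
\item That cost is paid by the new favorable term $-\theta''|Dv|^3$, provided $-\theta''-\frac{3\theta'^2}{8\theta}\geq0$. The paper takes $\theta(t)=(2-t)^{8/11}$, for which equality holds.
\item Since $1\leq\theta\leq2^{8/11}$, the weighted inequality still bounds $\int_{D_R}|Dv_R|$ uniformly in $R$.
\end{enumerate}
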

	
	\begin{proof}
		For $0\leq t\leq1$, define
		\begin{equation}\label{eq:theta-ode}
			\theta(t):=(2-t)^{8/11},
			\qquad
			-\theta''(t)-\frac{3\theta'^2(t)}{8\theta(t)}=0.
		\end{equation}
		We write $\theta$ for the composition $\theta(v)$ throughout the proof.
		Notice also that $\theta'<0$ and $1\leq\theta\leq2^{8/11}$.
		
		Consider the Green identity
		\begin{equation}\label{eq:I-def}
			I_R:=\int_{D_R}\left(
			\theta\Delta|Dv|-|Dv|\Delta\theta\right)
			=\int_{\partial D_R}
			\left(\theta D_\nu|Dv|
			-|Dv|D_\nu\theta\right).
		\end{equation}
		Since $v$ is harmonic, $\Delta\theta=\theta''|Dv|^2$.
		Moreover, \eqref{eq:scalar-gauss} and \eqref{eq:q-logu} give
		$R_M\geq\kappa-2q$.
		Substituting these relations and \eqref{eq:bochner-gauss} into
		\eqref{eq:I-def} gives
		\begin{equation*}
			\begin{split}
				I_R\geq{}&
				\frac\kappa2\int_{D_R}\theta|Dv|
				-\frac12\int_{D_R}\theta R_t|Dv|
				-\int_{D_R}q\theta|Dv|\\
				&+\frac34\int_{D_R}\theta
				\frac{|D|Dv||^2}{|Dv|}
				-\int_{D_R}\theta''|Dv|^3.
			\end{split}
		\end{equation*}

		It remains to rewrite the third term.
		Using \eqref{eq:q-logu} and integrating the Laplacian term by parts, we obtain
		\begin{equation}\label{eq:q-integration}
			\begin{split}
				-\int_{D_R}q\theta|Dv|
				={}&\int_{\partial D_R}\theta|Dv|
				\,D_\nu\log u-\int_{D_R}\left\langle
				D\bigl(\theta|Dv|\bigr),
				D\log u\right\rangle\\
				&+\int_{D_R}\theta|Dv|
				|D\log u|^2.
			\end{split}
		\end{equation}
		Substituting \eqref{eq:q-integration} into the preceding inequality, we
		obtain
		\begin{equation*}
			\begin{split}
				I_R\geq{}&
				\frac\kappa2\int_{D_R}\theta|Dv|
				-\frac12\int_{D_R}\theta R_t|Dv|
				+\int_{\partial D_R}\theta|Dv|D_\nu\log u\\
				&+\int_{D_R}\left\{
				\frac34\theta\frac{|D|Dv||^2}{|Dv|}
				-\theta''|Dv|^3\right.\\
				&\hspace{3.7cm}\left.
				-\left\langle D(\theta|Dv|),D\log u\right\rangle
				+\theta|Dv||D\log u|^2\right\}.
			\end{split}
		\end{equation*}
		We now estimate the integrand in the last integral.
		Since
		\begin{equation*}
			\begin{split}
				D(\theta|Dv|)
				&=\theta D|Dv|+\theta'|Dv|Dv,\\
				D|Dv|&=|Dv|D\log|Dv|
			\end{split}
		\end{equation*}
		on the regular set, we obtain
		\begin{equation}
			\begin{split}
				\widetilde P={}&\theta|Dv|
				\left(
				\frac34|D\log|Dv||^2
				-\left\langle D\log|Dv|,D\log u\right\rangle
				+|D\log u|^2\right)\\
				&-\theta'|Dv|
				\left\langle Dv,D\log u\right\rangle
				-\theta''|Dv|^3\\
				\ge{}&\theta|Dv|
				\left(
				\frac23|D\log u|^2\right)-\theta'|Dv|
				\left\langle Dv,D\log u\right\rangle
				-\theta''|Dv|^3\\
				\ge{}&\frac23\theta|Dv|(D_n\log u)^2
				-\theta'|Dv|^2D_n\log u
				-\theta''|Dv|^3\\
				\geq{}&
				-\frac{3(\theta')^{2}}{8\theta}|Dv|^3-\theta''|Dv|^3=0.
			\end{split}
		\end{equation}
		Here, we have used the Cauchy--Schwarz inequality in the first and third inequalities to control the mixed terms, and the second inequality follows from the fact that $|D\log u|^2 \geq (D_n\log u)^2$.
		
		We conclude that
		\begin{equation}\label{eq:pre-boundary}
			\begin{split}
				\frac\kappa2\int_{D_R}\theta|Dv|
				\leq{}&
				\frac12\int_{D_R}\theta R_t|Dv|\\
				&+\int_{\partial D_R}
				\left(\theta D_\nu|Dv|
				-|Dv|D_\nu\theta
				-\theta|Dv|D_\nu\log u\right).
			\end{split}
		\end{equation}
		
		Every regular level of $v$ is connected by \Cref{lem:connected-levels}.
		Therefore, the coarea and Gauss--Bonnet formulas give
		\begin{equation}
			\begin{split}
				\frac12\int_{D_R}\theta R_t|Dv|
				&=\frac12\int_0^1\theta(t)
				\left(\int_{\{v=t\}}R_t\right)\dd t\leq4\pi\int_0^1\theta(t)\,\dd t.
			\end{split}
		\end{equation}
		
		On the outer boundary, let $\nu$ point out of $D_R$, toward infinity.
		Since $v_R=0$ there and $v_R$ decreases toward infinity,
		\begin{equation*}
			D_\nu v_R=-|Dv_R|,
			\qquad
			D_\nu|Dv_R|=-H|Dv_R|,
			\qquad
			D_\nu\theta=-\theta'|Dv_R|.
		\end{equation*}
		Thus by the $\mu$-bubble critical equation
		$H+D_\nu\log u=h$, the boundary integrand
		in \eqref{eq:pre-boundary} is exactly
		\begin{equation}\label{eq:outer-boundary}
			\begin{split}
				-\theta h|Dv_R|+\theta'|Dv_R|^2
				&\leq\frac{\theta^2}{4|\theta'|}h^2.
			\end{split}
		\end{equation}
		Here we used $\theta'<0$ and the Cauchy--Schwarz inequality.
		Since $v_R=0$ on $\Sigma_R$, both $\theta(0)$ and $|\theta'(0)|$ are fixed
		positive constants.
		Hence
		\begin{equation*}
			\int_{\Sigma_R}
			\left(-\theta h|Dv_R|+\theta'|Dv_R|^2\right)
			\leq
			\frac{\theta^2(0)}{4|\theta'(0)|}
			\int_{\Sigma_R}h^2
			\leq C,
		\end{equation*}
		where the last bound is uniform in $R$ by \Cref{lem:bubble-estimate}.
		The inner boundary of $\Omega_R$ is fixed.
		As $1\leq\theta\leq2^{8/11}$, equations
		\eqref{eq:pre-boundary}--\eqref{eq:outer-boundary} prove
		\eqref{eq:L1-capacitor}.
	\end{proof}
	
	\begin{remark}\label{rem:critical-set}
		The calculation can be justified globally by replacing $|Dv|$ by
		$(|Dv|^2+\varepsilon)^{1/2}$,
		performing all integrations on the fixed compact domain $D_R$, and then
		letting $\varepsilon\downarrow0$.
		Standard estimates for the critical set of a harmonic function control the
		error terms; this is the same regularization used in Green-level proofs such
		as Munteanu--Wang \cite{MW}.
		The $\varepsilon$ limit is taken before $R\to\infty$, so no curvature bound
		uniform in $R$ is required.
	\end{remark}
	
	\section{Proof of the main theorem}\label{sec:main-proof}
	
	\begin{proof}[Proof of \Cref{thm:main}]
		By \Cref{prop:topological-reduction}, it remains to consider the situation in
		\eqref{eq:working-topology}.
		Suppose for contradiction that $M$ is nonparabolic.
		A subsequence of the capacitors $v_R$ converges smoothly on compact subsets
		of the exterior of $\Gamma$ to a nonconstant function $v$.
		Fatou's lemma and \Cref{prop:capacitor-inequality} give
		\begin{equation}\label{eq:finite-L1-limit}
			\int_{M\setminus K}|\nabla v|<\infty.
		\end{equation}
		
		Let $K$ be the compact region bounded by $\Gamma$, and let $\nu_K$ point
		from $K$ into $M\setminus K$.
		Since $v=1$ on $\Gamma$ and $v$ is
		nonconstant, the Hopf boundary lemma gives
		\begin{equation*}
			c:=-\int_\Gamma D_{\nu_K}v>0.
		\end{equation*}
		For almost every sufficiently large $r$, the divergence theorem applied
		to $B_r\setminus K$ gives
		\begin{equation*}
			\begin{split}
				0
				&=\int_{B_r\setminus K}\Delta v=\int_{\partial B_r}D_{\nu_r}v
				-\int_\Gamma D_{\nu_K}v,
			\end{split}
		\end{equation*}
		where $\nu_r$ is the outward normal to $\partial B_r$.
		Thus
		$\int_{\partial B_r}D_{\nu_r}v=-c$, and consequently
		\begin{equation}
			\begin{split}
				\int_{\partial B_r}|Dv|
				&\geq\left|\int_{\partial B_r}D_{\nu_r}v\right|
				=c.
			\end{split}
		\end{equation}
		Coarea now gives
		\begin{equation}
			\int_{B_T\setminus B_{T_0}}|\nabla v|
			\geq c(T-T_0),
		\end{equation}
		contradicting \eqref{eq:finite-L1-limit}.
		Thus $M$ is parabolic.
		
		Choose parabolic cutoffs $\eta_j\to1$ locally, with
		$\int|\nabla\eta_j|^2\to0$.
		Stability yields
		\begin{equation}
			\int_M q\eta_j^2\leq\int_M|\nabla\eta_j|^2\longrightarrow0.
		\end{equation}
		Since $q\geq0$, it follows that $q\equiv0$.
		Finally,
		\begin{equation*}
			q=|A|^2+\sum_{i=1}^3\overline K(\nu,e_i)
		\end{equation*}
		is a sum of nonnegative functions.
		Therefore $A\equiv0$ and
		$\overline\Ric(\nu,\nu)\equiv0$.
	\end{proof}
	
	
	\section{The counterexample construction}\label{sec:counterexample-proof}
	
	We now give the complete proof of \Cref{thm:counterexample}.
	The main point is that the first and second normal derivatives of the ambient
	metric can be changed simultaneously so that the hypersurface has a nonzero
	trace-free second fundamental form while its Jacobi potential remains
	unchanged.
	
	\begin{proposition}\label{prop:normal-derivative-deformation}
		Let $\Sigma^3$ be an embedded, two-sided, totally geodesic hypersurface in
		a Riemannian four-manifold $(X^4,G_0)$, and denote its induced metric by
		$g:=G_0|_{T\Sigma}$.
		Let $K\Subset\Sigma$, and suppose
		that $G_0$ has strictly positive sectional curvature on a neighborhood of
		$K$.
		Given a nonzero tensor
		\begin{equation*}
			T\in C_c^\infty(\operatorname{Sym}^2T^*\Sigma),
			\qquad
			\operatorname{supp}T\subset K,
			\qquad
			\tr_gT=0,
		\end{equation*}
		there are, for all sufficiently small $|\varepsilon|>0$, smooth metrics
		$G_\varepsilon$ on $X$ such that:
		\begin{enumerate}[label=\textup{(\roman*)}]
			\item $G_\varepsilon=G_0$ outside a compact neighborhood of $K$;
			\item $G_\varepsilon|_{T\Sigma}=G_0|_{T\Sigma}=g$;
			\item $\Sigma$ is minimal in $(X,G_\varepsilon)$ and its second
			fundamental form is $A_\varepsilon=\varepsilon T\not\equiv0$;
			\item the Jacobi potential satisfies
			\begin{equation*}
				|A_\varepsilon|^2
				+\Ric_{G_\varepsilon}(\nu,\nu)
				=\Ric_{G_0}(\nu,\nu);
			\end{equation*}
			\item $\Sec_{G_\varepsilon}>0$ on the perturbation region.
		\end{enumerate}
		In particular, if $\Sigma$ is stable for $G_0$, it remains stable for
		$G_\varepsilon$ but is no longer totally geodesic.
	\end{proposition}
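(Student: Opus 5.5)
The plan is to work in Fermi coordinates about $\Sigma$ and to perturb only the first two normal derivatives of the family of induced metrics, exactly as in the formula displayed in the introduction. Since $K$ is compact and $G_0$ has $\Sec>0$ near $K$, I fix $\delta>0$ and a compact neighbourhood $K'\supset K$ in $\Sigma$ with two properties. First, the normal exponential map $(x,t)\mapsto\exp_x(t\nu)$ is a diffeomorphism from $K'\times(-2\delta,2\delta)$ onto a neighbourhood $U$. Second, $\Sec_{G_0}\geq c_0>0$ on $\overline U$. On $U$ we write $G_0=dt^2+g^0_t$, where $g^0_0=g$ and $\partial_tg^0_t|_{t=0}=0$ because $\Sigma$ is totally geodesic. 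I choose a cutoff $\chi\in C_c^\infty(-2\delta,2\delta)$ with $\chi\equiv1$ on $(-\delta,\delta)$ and define
\begin{equation*}
	g^\varepsilon_t:=g^0_t+\chi(t)\Bigl(2\varepsilon tT+\tfrac23\varepsilon^2t^2|T|_g^2\,g\Bigr),
	\qquad
	G_\varepsilon:=dt^2+g^\varepsilon_t \text{ on } U,
	\qquad
	G_\varepsilon:=G_0 \text{ off } U.
\end{equation*}
Since $\operatorname{supp}T\subset K$, the perturbation is supported in the compact set $K\times\operatorname{supp}\chi$, so $G_\varepsilon$ is smooth and (i) holds. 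For $|\varepsilon|$ small, $g^\varepsilon_t$ is positive definite. Setting $t=0$ gives (ii).

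For (iii) and (iv) I use the standard Riccati formalism for $dt^2+g_t$. Put $S=\tfrac12 g_t^{-1}\partial_tg_t$. Then the second fundamental form of $\{t=0\}$ is $S|_{t=0}$, and differentiating $S$ gives $S'=-2S^2+\tfrac12g^{-1}g''$. Hence
\begin{equation*}
	\Ric(\partial_t,\partial_t)=-\tr S'-\tr S^2=-\tfrac12\tr_{g_t}(\partial_t^2g_t)+|S|^2 .
\end{equation*}
At $t=0$ we have $\chi\equiv1$, $g^\varepsilon_0=g$ and $\partial_tg^\varepsilon_0=2\varepsilon T$. Therefore $A_\varepsilon=\varepsilon T$, and $H_\varepsilon=\varepsilon\tr_gT=0$, which is (iii). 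Next, $\partial_t^2g^\varepsilon|_{0}=\partial_t^2g^0|_0+\tfrac43\varepsilon^2|T|^2g$, whose $g$-trace adds $4\varepsilon^2|T|^2$. Since $\partial_t g^0|_0=0$, no cross terms appear, and we obtain
\begin{equation*}
	\Ric_{G_\varepsilon}(\nu,\nu)=\Ric_{G_0}(\nu,\nu)-2\varepsilon^2|T|^2+\varepsilon^2|T|^2=\Ric_{G_0}(\nu,\nu)-\varepsilon^2|T|^2 .
\end{equation*}
Adding $|A_\varepsilon|^2=\varepsilon^2|T|^2$ yields (iv).

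For (v), the difference $G_\varepsilon-G_0$ is supported in a fixed compact set, and its $C^2$ norm there is $O(|\varepsilon|)$ in the fixed Fermi chart. Sectional curvature, viewed as a function on the Grassmann bundle of $2$-planes over $\overline U$, depends continuously on the $2$-jet of the metric. Since this Grassmann bundle is compact, for $|\varepsilon|$ small we get $\Sec_{G_\varepsilon}\geq c_0/2>0$ on $\overline U$. This uniformity is the only delicate point; I would make it explicit by bounding $|\mathrm{Rm}_{G_\varepsilon}-\mathrm{Rm}_{G_0}|\leq C|\varepsilon|$ on $\overline U$ in coordinates, with $C$ depending on $\|T\|_{C^2}$, $\chi$, and $G_0|_{\overline U}$. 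Outside $U$ the metric is unchanged.

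Finally, stability depends only on the induced metric $g$ (which gives the Dirichlet energy and the volume form) and on the potential $q=|A|^2+\Ric(\nu,\nu)$. By (ii) and (iv) both are unchanged, so the quadratic form in \eqref{eq:stability} is identical for $G_0$ and $G_\varepsilon$. Completeness and two-sidedness are also unaffected. Hence $\Sigma$ remains stable, while $A_\varepsilon=\varepsilon T\not\equiv0$ shows that it is no longer totally geodesic.
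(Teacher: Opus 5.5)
Your proposal is correct and follows essentially the same route as the paper. You use the same Fermi-coordinate deformation $g^\varepsilon_t=g^0_t+\chi(t)\bigl(2\varepsilon tT+\tfrac23\varepsilon^2t^2|T|_g^2g\bigr)$, the same normal Ricci formula $\Ric(\nu,\nu)=-\tfrac12\tr_{g_t}\ddot g_t+|A_t|^2$, the same cancellation $-2\varepsilon^2|T|^2+\varepsilon^2|T|^2$, and the same compactness/continuity argument for (v); the only cosmetic difference is that you derive the Ricci formula from the Riccati equation for $S=\tfrac12g_t^{-1}\dot g_t$, whereas the paper computes $R_{\nu i\nu j}$ directly from the connection.
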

	
	\begin{proof}
		Choose a relatively compact open set $U\Subset\Sigma$ containing $K$.
		Since $K$ is compact and $\Sigma$ is embedded and two-sided, the normal
		exponential map gives, after shrinking $U$ if necessary, a diffeomorphism
		\begin{equation*}
			\Phi:U\times(-\delta,\delta)\longrightarrow X,
			\qquad
			\Phi(x,t)=\exp_x(t\nu_x),
		\end{equation*}
		onto its image for some $\delta>0$.
		These are Fermi coordinates only around the compact set $K$.
		The Gauss lemma gives
		\begin{equation*}
			G_0=dt^2+g_t^0,
			\qquad
			\Sigma\cap\Phi(U\times(-\delta,\delta))=\{t=0\},
			\qquad
			\nu=\partial_t.
		\end{equation*}
		
		We first calculate the required normal-coordinate identities.
		For a metric $G=dt^2+g_t$, write $g:=g_{t=0}$ for the induced metric on the
		central slice.
		With the convention
		\begin{equation*}
			A_{ij}=G(D_{\partial_i}\nu,\partial_j),
		\end{equation*}
		one has
		\begin{equation*}
			A_{ij}=\frac12\dot g_{ij}.
		\end{equation*}
		For every $t$ for which the Fermi coordinates are defined, the normal
		curvature satisfies
		\begin{equation*}
			R_{\nu i\nu j}
			=-\frac12\ddot g_{ij}+(A^2)_{ij}.
		\end{equation*}
		To see this, use
		\begin{equation*}
			D_\nu\nu=0,
			\qquad
			D_{\partial_i}\nu=D_\nu\partial_i=A_i{}^k\partial_k,
			\qquad
			G(D_{\partial_i}\partial_j,\nu)=-A_{ij}.
		\end{equation*}
		With the curvature convention
		$
		R(X,Y)Z=D_XD_YZ-D_YD_XZ-D_{[X,Y]}Z,
		$
		we obtain
		\begin{align*}
			R_{\nu i\nu j}
			&=G(D_\nu D_{\partial_i}\partial_j,\nu)
			-G(D_{\partial_i}D_\nu\partial_j,\nu)\\
			&=-\dot A_{ij}+A_j{}^kA_{ik}
			=-\frac12\ddot g_{ij}+(A^2)_{ij}.
		\end{align*}
		Taking the trace with respect to the metric $g_t$ on the slice
		$\{t=\mathrm{constant}\}$ gives, throughout the collar,
		\begin{equation*}
			\Ric_G(\nu,\nu)
			=-\frac12\tr_{g_t}\ddot g_t+|A_t|^2.
		\end{equation*}
		Specializing to $t=0$, where $g=g_{t=0}$, gives
		\begin{equation*}
			\Ric_G(\nu,\nu)\big|_{t=0}
			=-\frac12\tr_g\ddot g_0+|A_0|^2.
		\end{equation*}
		Hence the Jacobi potential of a minimal central slice is
		\begin{equation*}
			\left(\Ric_G(\nu,\nu)+|A|^2\right)\big|_{t=0}
			=-\frac12\tr_g\ddot g_0+2|A_0|^2.
		\end{equation*}
		
		Because the original slice is totally geodesic,
		$\dot g_0^0=0$.
		Choose an even cutoff
		$\chi\in C_c^\infty((-\delta,\delta))$ which is identically one near
		$t=0$.
		Extend $T$ and $g=g_0^0$ constantly in the $t$ direction and define
		\begin{equation*}
			g_t^\varepsilon
			:=g_t^0+2\varepsilon t\chi(t)T
			+\frac23\varepsilon^2t^2\chi(t)|T|_g^2g.
		\end{equation*}
		Define $G_\varepsilon=dt^2+g_t^\varepsilon$ on the collar and
		$G_\varepsilon=G_0$ outside it.
		The compact support of $T$ handles the lateral boundary, while $\chi$ handles
		the two normal boundary faces, so these definitions glue smoothly.
		For small $|\varepsilon|$, the tensors
		$g_t^\varepsilon$ remain positive definite.
		
		Since $\chi(0)=1$ and $\chi'(0)=0$, the metric and its first two normal
		derivatives at the central slice are
		\begin{equation*}
			g_0^\varepsilon=g_0^0=g,
			\qquad
			\dot g_0^\varepsilon=2\varepsilon T,
			\qquad
			\ddot g_0^\varepsilon
			=\ddot g_0^0+\frac43\varepsilon^2|T|_g^2g.
		\end{equation*}
		It follows that
		\begin{equation*}
			A_\varepsilon=\varepsilon T,
			\qquad
			H_\varepsilon=\varepsilon\tr_gT=0.
		\end{equation*}
		Thus $\Sigma$ remains minimal and is not totally geodesic.
		Since $\dim\Sigma=3$,
		\begin{equation*}
			\tr_g(\ddot g_0^\varepsilon-\ddot g_0^0)
			=4\varepsilon^2|T|_g^2.
		\end{equation*}
		Since the original slice is totally geodesic, $A_0=0$, and the normal
		Ricci formula for $G_0$ gives
		\begin{equation*}
			\Ric_{G_0}(\nu,\nu)
			=-\frac12\tr_g\ddot g_0^0.
		\end{equation*}
		Applying the same formula to $G_\varepsilon$ and using the preceding trace
		identity gives
		\begin{align*}
			\Ric_{G_\varepsilon}(\nu,\nu)
			&=-\frac12\tr_g\ddot g_0^\varepsilon
			+|A_\varepsilon|_g^2\\
			&=-\frac12\tr_g\ddot g_0^0
			-2\varepsilon^2|T|_g^2
			+\varepsilon^2|T|_g^2\\
			&=\Ric_{G_0}(\nu,\nu)-\varepsilon^2|T|_g^2.
		\end{align*}
		Because $|A_\varepsilon|^2=\varepsilon^2|T|_g^2$, the two changes cancel:
		\begin{equation*}
			|A_\varepsilon|^2
			+\Ric_{G_\varepsilon}(\nu,\nu)
			=\Ric_{G_0}(\nu,\nu).
		\end{equation*}
		The induced metric and volume form on $\Sigma$ are unchanged.
		Therefore,
		for every $\varphi\in C_c^\infty(\Sigma)$,
		\begin{align*}
			Q_\varepsilon(\varphi)
			&=\int_\Sigma\left(
			|\nabla\varphi|_g^2
			-\bigl[|A_\varepsilon|^2
			+\Ric_{G_\varepsilon}(\nu,\nu)\bigr]\varphi^2
			\right)\dd\mu_g\\
			&=\int_\Sigma\left(
			|\nabla\varphi|_g^2
			-\Ric_{G_0}(\nu,\nu)\varphi^2
			\right)\dd\mu_g
			=Q_0(\varphi).
		\end{align*}
		Hence stability is preserved exactly.
		
		Finally, on the fixed compact perturbation region,
		\begin{equation*}
			\|G_\varepsilon-G_0\|_{C^2}=O(|\varepsilon|).
		\end{equation*}
		Sectional curvature depends continuously on the metric, its first two
		derivatives, and the tangent two-plane.
		The Grassmann bundle over this compact region is compact, and $\Sec_{G_0}$
		has a positive minimum there.
		Thus
		$\Sec_{G_\varepsilon}>0$ on the perturbation region for all sufficiently
		small $|\varepsilon|$.
		This proves the proposition.
	\end{proof}
	
	We next recall the complete starting model.
	For
	$\alpha\in(0,1)$ sufficiently close to one, define
	\begin{equation*}
		\rho(r)
		:=\alpha r+(1-\alpha)\int_0^r e^{-s^2}\,ds
	\end{equation*}
	and equip $\R^4$ with the rotationally symmetric metric
	\begin{equation*}
		G_0=dr^2+\rho^2(r)g_{S^3}.
	\end{equation*}
	This is the model of Chodosh--Li--Stryker
	\cite[Example~1.2 and Appendix~B.1]{CLS}.
	Since
	\begin{equation*}
		\rho'(r)=\alpha+(1-\alpha)e^{-r^2},
		\qquad
		\rho''(r)=-2(1-\alpha)re^{-r^2},
	\end{equation*}
	its radial and tangential sectional curvatures satisfy
	\begin{equation*}
		-\frac{\rho''}{\rho}>0,
		\qquad
		\frac{1-(\rho')^2}{\rho^2}>0
	\end{equation*}
	for $r>0$.
	The expansion $\rho(r)=r+O(r^3)$ at the origin gives a smooth
	metric there, and the radial $dr^2$ term makes $G_0$ complete.
	
	An equatorial copy
	\begin{equation*}
		\Sigma=[0,\infty)\times S^2
		\subset[0,\infty)\times S^3
	\end{equation*}
	is an embedded, two-sided, totally geodesic hypersurface diffeomorphic to
	$\R^3$, with induced metric
	\begin{equation*}
		g=dr^2+\rho^2(r)g_{S^2}.
	\end{equation*}
	For $\alpha$ sufficiently close to one, Chodosh--Li--Stryker \cite{CLS}
	use the Euclidean Hardy inequality and $\alpha r\leq\rho(r)\leq r$ to prove
	\begin{equation*}
		\int_\Sigma|\nabla\varphi|^2\dd\mu_g
		\geq
		\int_\Sigma\Ric_{G_0}(\nu,\nu)\varphi^2\dd\mu_g
	\end{equation*}
	for every $\varphi\in C_c^\infty(\Sigma)$.
	Hence the totally geodesic slice is stable.
	
	For completeness, we verify directly that $\Sigma$ is nonparabolic; this
	uses no Ricci-curvature assumption.
	For a radial function $f=f(r)$,
	\begin{equation*}
		\Delta_gf=f''+2\frac{\rho'}{\rho}f'.
	\end{equation*}
	The Dirichlet Green function of a model ball $B_R(o)$, with pole at the
	origin, is
	\begin{equation*}
		G_R(o,x)=\frac1{4\pi}
		\int_{r(x)}^R\frac{ds}{\rho^2(s)}.
	\end{equation*}
	Indeed, $(\rho^2G_R')'=0$ away from the pole,
	$G_R=0$ on $\partial B_R(o)$, and $\rho(r)=r+O(r^3)$ gives the Euclidean
	singularity $G_R(o,x)=(4\pi r(x))^{-1}+O(1)$.
	Since
	$\rho(r)\sim\alpha r$ as $r\to\infty$, the monotone limit
	\begin{equation*}
		G(o,x)=\frac1{4\pi}
		\int_{r(x)}^\infty\frac{ds}{\rho^2(s)}
	\end{equation*}
	is finite away from $o$.
	It is the positive minimal Green function of $-\Delta_g$, so $\Sigma$ is
	nonparabolic.
	The same asymptotics show that
	$\Sigma$ has one end and cubic volume growth.
	
	\begin{proof}[Proof of \Cref{thm:counterexample}]
		Apply \Cref{prop:normal-derivative-deformation} to the equatorial slice above
		and to any nonzero compactly supported trace-free symmetric tensor $T$ on
		$\Sigma$.
		The resulting metric $G_\varepsilon$ equals $G_0$ outside a
		compact set and is uniformly equivalent to $G_0$ for small
		$|\varepsilon|$; hence it is complete.
		It has strictly positive sectional curvature everywhere.
		The induced metric on $\Sigma$ is unchanged, so completeness, one-endedness,
		nonparabolicity, and the diffeomorphism $\Sigma\cong\R^3$ are unchanged as
		well.
		The proposition
		preserves the stability form exactly and gives
		\begin{equation*}
			H_\varepsilon=0,
			\qquad
			A_\varepsilon=\varepsilon T\not\equiv0.
		\end{equation*}
		Thus $\Sigma$ is a complete, embedded, two-sided stable minimal hypersurface
		which is not totally geodesic.
		This proves the theorem.
	\end{proof}
	
	\bibliographystyle{amsalpha}
	\bibliography{reference}

\newcommand{\etalchar}[1]{$^{#1}$}
\providecommand{\bysame}{\leavevmode\hbox to3em{\hrulefill}\thinspace}
\providecommand{\MR}{\relax\ifhmode\unskip\space\fi MR }
\providecommand{\MRhref}[2]{%
  \href{http://www.ams.org/mathscinet-getitem?mr=#1}{#2}
}
\providecommand{\href}[2]{#2}
\begin{thebibliography}{CCM{\etalchar{+}}26b}

\bibitem[APX24]{APX}
Gioacchino Antonelli, Marco Pozzetta, and Kai Xu, \emph{A sharp spectral
  splitting theorem}, arXiv:2412.12707.

\bibitem[AX26]{AX}
Gioacchino Antonelli and Kai Xu, \emph{New spectral {Bishop--Gromov} and
  {Bonnet--Myers} theorems and applications to isoperimetry}, To appear in the
  Journal of the European Mathematical Society; arXiv:2405.08918, 2026.

\bibitem[CCE16]{Carlotto-Chodosh-Eichmair-PMT}
Alessandro Carlotto, Otis Chodosh, and Michael Eichmair, \emph{Effective
  versions of the positive mass theorem}, Invent. Math. \textbf{206} (2016),
  no.~3, 975--1016.

\bibitem[CCM{\etalchar{+}}26a]{CCMMR26}
Xavier Cabr{\'e}, Giovanni Catino, Luciano Mari, Paolo Mastrolia, and Alberto
  Roncoroni, \emph{Gradient estimates for the {Green} kernel under spectral
  {Ricci} bounds, and the stable {Bernstein} theorem in {$\mathbb R^4$}}, 2026,
  arXiv:2604.14393.

\bibitem[CCM{\etalchar{+}}26b]{cabre2026gradientestimatesgreenkernel}
Xavier Cabre, Giovanni Catino, Luciano Mari, Paolo Mastrolia, and Alberto
  Roncoroni, \emph{Gradient estimates for the green kernel under spectral ricci
  bounds, and the stable bernstein theorem in $\mathbb{R}^4$}.

\bibitem[CL23]{chodoshliR4anisotropic}
Otis Chodosh and Chao Li, \emph{Stable anisotropic minimal hypersurfaces in
  $\mathbb{R}^4$}, Forum Math. Pi \textbf{11} (2023), no.~e3, 1--22.

\bibitem[CL24]{ChodoshLi}
\bysame, \emph{Stable minimal hypersurfaces in {$\mathbb R^4$}}, Acta
  Mathematica \textbf{233} (2024), no.~1, 1--31.

\bibitem[CLS26]{CLS}
Otis Chodosh, Chao Li, and Douglas Stryker, \emph{Complete stable minimal
  hypersurfaces in positively curved four-manifolds}, Journal of the European
  Mathematical Society \textbf{28} (2026), no.~4, 1457--1487.

\bibitem[CMMR24]{CMMR}
Giovanni Catino, Luciano Mari, Paolo Mastrolia, and Alberto Roncoroni,
  \emph{Criticality, splitting theorems under spectral {Ricci} bounds and the
  topology of stable minimal hypersurfaces}, 2024, arXiv:2412.12631.

\bibitem[CMR24]{catino}
Giovanni Catino, Paolo Mastrolia, and Alberto Roncoroni, \emph{Two rigidity
  results for stable minimal hypersurfaces}, Geom. Funct. Anal. \textbf{34}
  (2024), no.~1, 1--18.

\bibitem[CS26]{ChaiSun}
Xiaoxiang Chai and Yukai Sun, \emph{Some rigidity theorems for spectral
  curvature bounds}, 2026, arXiv:2604.04052.

\bibitem[CV35]{CohnVossen}
Stefan Cohn-Vossen, \emph{{K{\"u}rzeste Wege und Totalkr{\"u}mmung auf
  Fl{\"a}chen}}, Compositio Mathematica \textbf{2} (1935), 69--133.

\bibitem[dCP79]{doCarmoPeng}
M.~do~Carmo and C.~K. Peng, \emph{Stable complete minimal surfaces in {$\mathbb
  R^3$} are planes}, Bulletin of the American Mathematical Society \textbf{1}
  (1979), no.~6, 903--906.

\bibitem[DMS26]{maximo-example}
PHILIPP~REISER DAVI~MAXIMO and DANIELE SEMOLA, \emph{Ricci curvature and
  minimal hypersurfaces with large betti numbers}, Calc. Var. Partial
  Differential Equations \textbf{65} (2026).

\bibitem[FCS80]{FCS}
Doris Fischer-Colbrie and Richard Schoen, \emph{The structure of complete
  stable minimal surfaces in three-manifolds of nonnegative scalar curvature},
  Communications on Pure and Applied Mathematics \textbf{33} (1980), no.~2,
  199--211.

\bibitem[Hon25]{hong24}
Han Hong, \emph{C{MC} hypersurface with finite index in hyperbolic space
  {$\Bbb{H}^4$}}, Adv. Math. \textbf{478} (2025), Paper No. 110408, 20.
  \MR{4924061}

\bibitem[HY24]{hongyan}
Han Hong and Zetian Yan, \emph{Rigidity of {CMC} hypersurfaces in 5- and
  6-manifolds}, 2024, arXiv:2405.06867.

\bibitem[MW24]{MW24}
Ovidiu Munteanu and Jiaping Wang, \emph{Bottom spectrum of three-dimensional
  manifolds with scalar curvature lower bound}, Journal of Functional Analysis
  \textbf{287} (2024), no.~2, 110457, 41 pp.

\bibitem[MW26]{MW}
\bysame, \emph{Bottom spectrum and parabolicity of three-manifolds with scalar
  curvature lower bound}, 2026, arXiv:2607.06508.

\bibitem[Pog81]{Pogorelov}
A.~V. Pogorelov, \emph{On the stability of minimal surfaces}, Soviet
  Mathematics. Doklady \textbf{24} (1981), 274--276, Translation of Dokl. Akad.
  Nauk SSSR 260 (1981), 293--295.

\bibitem[SY79]{Schoen-Yau-PSC}
Richard Schoen and Shing~Tung Yau, \emph{On the structure of manifolds with
  positive scalar curvature}, Manuscripta Math. \textbf{28} (1979), no.~1-3,
  159--183.

\bibitem[Wu23]{wuyujie}
Yujie Wu, \emph{Free boundary stable minimal hypersurfaces in positively curved
  4-manifolds}, 2023, arXiv:2308.08103.

\bibitem[Wu25]{wuyujie2}
\bysame, \emph{Rigidity of complete free boundary minimal hypersurfaces in
  convex {NNSC} manifolds}, 2025, arXiv:2504.20585.

\bibitem[Yau76]{Yau76}
Shing-Tung Yau, \emph{Some function-theoretic properties of complete
  {Riemannian} manifolds and their applications to geometry}, Indiana
  University Mathematics Journal \textbf{25} (1976), 659--670.

\bibitem[YZ26]{YanZhu}
Zetian Yan and Xinyu Zhu, \emph{Nonnegative {Ricci} curvature and uniformly
  convex boundary forces compactness}, 2026, arXiv:2605.31477.

\end{thebibliography}
	
\end{document}